\documentclass[sn-basic]{sn-jnl}

\usepackage{amsmath,amssymb}
\usepackage{mathtools,mathrsfs}
\usepackage{graphicx}
\usepackage{subcaption}
\usepackage{booktabs}
\usepackage{adjustbox}

\usepackage{pgf}
\usepackage{tikz}
\usetikzlibrary{arrows.meta,bending}
\usepackage{pgfplots}
\pgfplotsset{compat=1.18}

\graphicspath{{figs/}{./}}

\usepackage{enumitem}
\usepackage{algorithm}
\usepackage{algorithmic}
\usepackage{hyperref}
\usepackage{cleveref}
\setcitestyle{numbers,sort\&compress}

\usepackage{amsthm}
\theoremstyle{plain}
\newtheorem{theorem}{Theorem}[section]
\newtheorem{lemma}[theorem]{Lemma}
\newtheorem{proposition}[theorem]{Proposition}

\newtheorem{definition}[theorem]{Definition}
\theoremstyle{remark}
\newtheorem{remark}[theorem]{Remark}

\crefname{theorem}{Theorem}{Theorems}
\crefname{lemma}{Lemma}{Lemmas}
\crefname{definition}{Definition}{Definitions}
\crefname{proposition}{Proposition}{Propositions}
\crefname{corollary}{Corollary}{Corollaries}
\crefname{equation}{Eq.}{Eqs.}
\crefname{section}{Section}{Sections}
\crefname{figure}{Fig.}{Figs.}
\crefname{table}{Table}{Tables}

\title{Continuum dynamics from quantised interaction rules}

\author*[1,4]{\fnm{Park} \sur{Junhu}}
\author[2,3]{\fnm{Youngsoo} \sur{Ha}}
\author*[1,2,3,4]{\fnm{Myungjoo} \sur{Kang}}

\affil[1]{\orgdiv{Computational Science \& Technology, College of Natural Sciences},
          \orgname{Seoul National University}, \city{Seoul}, \country{Republic of Korea}}
\affil[2]{\orgdiv{Department of Mathematical Sciences},
          \orgname{Seoul National University}, \city{Seoul}, \country{Republic of Korea}}
\affil[3]{\orgdiv{Research Institute of Mathematics},
          \orgname{Seoul National University}, \city{Seoul}, \country{Republic of Korea}}
\affil[4]{\orgdiv{R\&D Center},
          \orgname{iTrix Co., Ltd}, \city{Seoul}, \country{Republic of Korea}}

\email{nller.park@snu.ac.kr}
\email{mkang@snu.ac.kr}

\begin{document}

\abstract{%
Hyperbolic conservation laws are conventionally solved by evolving reconstructed floating-point fields, incurring both computational overhead and structural diffusion near discontinuities. Here we introduce the Fast Quantised Numerical Method (FQNM), in which the conservative operator is realised directly as an antisymmetric integer transfer rule on a countable state space, with continuum fields appearing only as reconstructed observables.

For scalar conservation laws with monotone flux splitting, we establish exact conservation, monotonicity, TVD and $L^1$ stability, and convergence of the reconstructed solution to the entropy solution under $\delta/\Delta x \to 0$. We further show that distinct classical flux formulations collapse to identical dynamics whenever they induce the same integer transfer rule, identifying the transfer operator as the effective computational object.

Across representative regimes, FQNM remains stable near the Nyquist limit in high-frequency transport, preserves grid-level shock structure in Burgers dynamics, and in a matched Roe-flux Sod prototype preserves shock structure at the density-scale conserved-state level relative to an exact Riemann reference, while achieving order-of-magnitude prototype acceleration over floating-point baselines.

These results demonstrate that, for conservative hyperbolic dynamics, executing the operator as quantised transfer rather than reconstructed field evolution can simultaneously alter structural fidelity and reduce computational cost, establishing a new representation paradigm for conservation-law solvers.
}

\keywords{Conservation laws, Quantised interaction rules, Integer arithmetic, Finite-volume methods, Shock capturing}

\maketitle


\section{Introduction}
\label{sec:introduction}

Hyperbolic conservation laws govern transport, wave propagation, and shock formation across a wide range of physical systems. Their numerical solution is typically constructed by representing continuum fields on a grid and evolving them through floating-point approximations of differential operators. This paradigm is effective but costly: it requires repeated reconstruction, nonlinear limiting, and flux evaluation, particularly in regimes dominated by discontinuities or high-frequency content.

The central observation of this work is that, for conservative dynamics, the primitive object is not the pointwise value of a field but the flux transfer between neighbouring regions. Classical numerical methods represent this through differential operators acting on reconstructed fields. Here we instead treat the conservative transfer itself as the primary computational object.

We introduce the Fast Quantised Numerical Method (FQNM), in which the conservative operator is realised directly as an antisymmetric integer transfer rule on a countable state space. The evolved variables are integer-valued states, while the physical field appears only as a reconstructed observable. This is therefore not a quantisation of the field, but a quantisation of the operator that executes the conservation law.

This change of representation has two immediate consequences. First, conservation becomes an exact arithmetic invariant: the global quantity is preserved by integer telescoping rather than by floating-point cancellation. Second, the numerical update reduces to a fixed local transfer rule, eliminating reconstruction and nonlinear control flow from the online computation.

The operator viewpoint also clarifies the role of classical flux formulations. Different numerical fluxes that induce the same integer transfer map on the visited state space generate identical dynamics under quantised execution. Thus the effective computational object is the induced transfer rule rather than the analytic form of the flux prior to quantisation.

We develop this framework for scalar conservation laws with monotone split fluxes. For the reconstructed observable, we prove consistency with the classical finite-volume operator, monotonicity, TVD and \(L^1\) stability, and convergence to the entropy solution under the scaling \(\delta/\Delta x \to 0\). Exact conservation follows directly from the integer structure of the update.

The empirical evaluation focuses on regimes where the representational change is most consequential. In high-frequency Gaussian transport near the Nyquist limit, FQNM remains stable while a WENO5+RK3 baseline deteriorates. In inviscid Burgers dynamics, it preserves grid-level shock structure and yields order-of-magnitude prototype acceleration over a floating-point baseline. In the Sod shock-tube system test, under matched Roe-flux conditions, the quantised transfer realisation produces a less diffusive shock transition and reduced density error relative to the exact Sod reference in the tested prototype setting.

These results support a simple claim: for conservative hyperbolic dynamics, both computational efficiency and structural fidelity are determined by the representation of the operator rather than by the reconstruction of continuum fields. FQNM provides a concrete realisation of this principle.

\section{Method}
\label{sec:method}

We work on a uniform one-dimensional Cartesian grid with cell centres $x_i=i\Delta x$ and discrete times $t^n=n\Delta t$. The quantity $u_i^n$ denotes the cell-average approximation to the continuum field at cell $i$ and time level $n$, and $q_i^n\in\mathbb Z$ denotes its quantised state. When no ambiguity arises, we suppress the superscript $n$ inside pointwise definitions such as flux tables.

Consider the scalar conservation law
\begin{equation}
\partial_t u + \partial_x f(u) = 0.
\label{eq:conservation_law}
\end{equation}

Let $\delta>0$ denote a fixed quantisation resolution and define, at each time level $n$,
\begin{equation}
q_i^n = \mathrm{round}(u_i^n / \delta), \qquad u_i^n \approx \delta q_i^n.
\label{eq:quantisation}
\end{equation}
Thus $u_i^n$ is the reconstructed physical observable, whereas $q_i^n$ is the evolved discrete state.

We introduce a flux splitting
 \begin{equation}
 f(u) = f^+(u) + f^-(u), \qquad (f^+)'(u) \ge 0,\quad (f^-)'(u) \le 0,
 \label{eq:flux_splitting}
 \end{equation}
 for instance via a Lax--Friedrichs decomposition with $\alpha \ge \max |f'(u)|$.

For a fixed pair $(\Delta x,\Delta t)$, we define the integer transfer maps
\begin{equation}
\phi^\pm(q) = \mathrm{round}\!\left(
 f^\pm(\delta q)\frac{\Delta t}{\Delta x}\delta^{-1}
\right).
\label{eq:flux_table}
\end{equation}
When the time index is omitted in $\phi^\pm(q)$, the argument $q$ denotes the current local state at the relevant timestep.

The tabulated quantities $\phi^\pm(q)$ represent dimensionless moved-count increments (number of quanta transferred per timestep), not physical fluxes. Physical flux units are recovered only after reconstruction and rescaling by $\delta/\Delta t$.

It is important to emphasise that the physical flux function $f(u)$ is not approximated or learned. The flux law is realised as an integer-valued transfer map on the quantised state space. For simple scalar fluxes this map may be tabulated, but tabulation is an execution strategy rather than the definition of the method. The online solver therefore executes the same continuum law through integer transfer operations. In this sense, FQNM does not replace the governing equation; it changes only the execution layer.

The conservative update is
\begin{equation}
q_i^{n+1}
= q_i^n
- \Big(
  \phi^+(q_i^n) + \phi^-(q_{i+1}^n)
  - \phi^+(q_{i-1}^n) - \phi^-(q_i^n)
  \Big),
\label{eq:update_rule}
\end{equation}
and therefore already has the conservative flux-difference form
\begin{equation}
q_i^{n+1}=q_i^n-\bigl(F_{i+\frac12}^n-F_{i-\frac12}^n\bigr),
\label{eq:update_rule_conservative_form}
\end{equation}
with interface count flux
\begin{equation}
F_{i+\frac12}^n:=F(q_i^n,q_{i+1}^n)=\phi^+(q_i^n)+\phi^-(q_{i+1}^n).
\label{eq:numerical_count_flux_definition}
\end{equation}
Thus the conserved quantity is the cellwise integer state, and conservation follows from the antisymmetry of interface transfer through telescoping cancellation.
In contrast to classical schemes, conservation is not enforced through reconstructed fluxes, but realised directly via antisymmetric local transfer.

\subsection{From quantised states to observables and operators}
\label{subsec:observables_operators}

The central distinction in FQNM is between quantised microscopic states and reconstructed physical observables. The evolved variable is the integer state $q_i^n\in\mathbb{Z}$, whereas the physical field is reconstructed by
\begin{equation}
(\mathcal R_\delta q)_i := \delta q_i.
\label{eq:reconstruction_operator}
\end{equation}
Thus the field value is not primitive but a reconstructed observable induced by a countable configuration. Although each local state lies in $\mathbb Z$, any finite computation with bounded data visits only a finite subset of that countable state space.

Classical flux formulations act on reconstructed fields, whereas FQNM operates directly on the underlying transfer rule.

After reconstruction, the same update reads
 \begin{equation}
 \frac{u_i^{n+1}-u_i^n}{\Delta t}
 = -\frac{\delta}{\Delta t}\Bigl(F_{i+\frac12}^n-F_{i-\frac12}^n\Bigr).
 \label{eq:reconstructed_flux_difference_form}
 \end{equation}
Thus the quantised state update and the reconstructed observable update share the same conservative flux-difference structure.

To compare with the classical split finite-volume operator, define
\begin{equation}
\mathcal O_h[u]_i := -\frac{\mathcal F(u_i,u_{i+1})-\mathcal F(u_{i-1},u_i)}{\Delta x},
\qquad
\mathcal F(u_L,u_R):=f^+(u_L)+f^-(u_R).
\label{eq:classical_operator_definition}
\end{equation}
Thus operator quantisation in FQNM means that the conservative update is organised on the countable state space $q\in\mathbb Z^N$, while the reconstructed observable $u=\mathcal R_\delta q$ is compared with the classical finite-volume operator only after both evolutions are written in conservative flux-difference form. The divergence structure is unchanged at the continuum level; what changes is the state space on which the primitive update acts.

 \begin{proposition}[Update-level correspondence with the classical split scheme]
\label{prop:operator_quantisation_error}
Let $u_i=\delta q_i$. Then the reconstructed one-step FQNM increment differs from the classical split finite-volume increment by $\mathcal O(\delta)$:
\begin{equation}
u_i^{n+1}-u_i^n
= -\frac{\Delta t}{\Delta x}\Bigl(\mathcal F(u_i,u_{i+1})-\mathcal F(u_{i-1},u_i)\Bigr)
+ \mathcal O(\delta).
\label{eq:operator_quantisation_error_bound}
\end{equation}
Equivalently, after division by $\Delta t$,
\begin{equation}
\frac{u_i^{n+1}-u_i^n}{\Delta t}
= \mathcal O_h[u]_i + \mathcal O\!\left(\frac{\delta}{\Delta t}\right).
\label{eq:operator_quantisation_error_asymptotic}
\end{equation}
Under fixed CFL scaling $\Delta t=\mathcal O(\Delta x)$, the operator-level residual is therefore $\mathcal O(\delta/\Delta x)$.
\end{proposition}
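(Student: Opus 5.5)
The argument is a direct rounding-error estimate applied to the reconstructed update \eqref{eq:reconstructed_flux_difference_form}. Throughout, fix $(\Delta x,\Delta t)$ and write $\lambda:=\Delta t/\Delta x$.

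\textbf{Step 1 (single-table rounding).} Since $|\mathrm{round}(y)-y|\le \tfrac12$ for every real $y$, the definition \eqref{eq:flux_table} gives, for every $q\in\mathbb Z$,
\[
\phi^\pm(q)=\lambda\,\delta^{-1} f^\pm(\delta q)+r^\pm(q),\qquad |r^\pm(q)|\le \tfrac12 .
\]
Multiplying by $\delta$, this becomes
\[
\delta\,\phi^\pm(q)=\lambda f^\pm(u)+\delta\, r^\pm(q),\qquad u=\delta q .
\]
This is the only place where quantisation enters. No smoothness of $f^\pm$ is needed here, because the argument $u_i=\delta q_i$ is exactly representable. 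The hypothesis $u_i=\delta q_i$ is used precisely to make this true.

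\textbf{Step 2 (interface flux).} Substitute Step 1 into \eqref{eq:numerical_count_flux_definition}. This gives
\[
\delta F_{i+\frac12}=\lambda\,\mathcal F(u_i,u_{i+1})+\delta\,\rho_{i+\frac12},
\qquad \rho_{i+\frac12}:=r^+(q_i)+r^-(q_{i+1}),\qquad |\rho_{i+\frac12}|\le 1 .
\]

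\textbf{Step 3 (one-step increment).} Insert Step 2 into the reconstructed update $u_i^{n+1}-u_i^n=-\delta(F_{i+\frac12}-F_{i-\frac12})$, which is \eqref{eq:reconstructed_flux_difference_form} multiplied by $\Delta t$. This yields \eqref{eq:operator_quantisation_error_bound} with an explicit remainder
\[
-\delta\bigl(\rho_{i+\frac12}-\rho_{i-\frac12}\bigr),
\]
whose modulus is at most $2\delta$. The constant is uniform in $i$, $n$, $\Delta x$ and $\Delta t$.

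\textbf{Step 4 (rescaling).} Dividing by $\Delta t$ and using the definition \eqref{eq:classical_operator_definition} of $\mathcal O_h$ gives \eqref{eq:operator_quantisation_error_asymptotic}, with remainder bounded by $2\delta/\Delta t$. Under fixed CFL, $\Delta t=\mathcal O(\Delta x)$, and more precisely $\Delta t=c\,\Delta x$ with $c$ bounded below. Then $\delta/\Delta t=\mathcal O(\delta/\Delta x)$.

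\textbf{Main obstacle.} The delicate step is interpretive rather than computational. One must make clear that the $\mathcal O(\delta)$ in \eqref{eq:operator_quantisation_error_bound} is an absolute bound of $2\delta$ that does not shrink with $\Delta t$. It must not be confused with a truncation error, which is why the operator-level residual scales as $\delta/\Delta t$.

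Two further points need to be stated explicitly. First, the comparison is between increments starting from the same quantised data $u_i=\delta q_i$; it is not between trajectories. Second, the rounding convention at half-integers, whether round-half-even or otherwise, is irrelevant, since only $|r^\pm|\le\tfrac12$ is used.
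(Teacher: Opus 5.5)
Your proposal is correct and follows the paper's proof essentially step for step: you write each rounded table entry as its exact scaled value plus an error of modulus at most $\tfrac12$, substitute into the conservative flux-difference form, multiply by $\delta$, and then divide by $\Delta t$. Your version is slightly sharper than the paper's in two ways: it makes the constant explicit ($2\delta$), and it notes that the final conversion $\delta/\Delta t=\mathcal O(\delta/\Delta x)$ needs $\Delta t/\Delta x$ bounded below (which fixed $\nu$ guarantees), not merely $\Delta t=\mathcal O(\Delta x)$ as the paper writes.
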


\begin{proof}
From \cref{eq:flux_table}, write
\[
\phi^\pm(q)= f^\pm(\delta q)\frac{\Delta t}{\Delta x}\delta^{-1}+\varepsilon^\pm(q),
\qquad |\varepsilon^\pm(q)|\le \tfrac12.
\]
Substituting into \cref{eq:update_rule_conservative_form} and multiplying by $\delta$ yields
\[
u_i^{n+1}-u_i^n
= -\frac{\Delta t}{\Delta x}\Bigl(\mathcal F(u_i,u_{i+1})-\mathcal F(u_{i-1},u_i)\Bigr)
+ \mathcal O(\delta),
\]
since $\delta\varepsilon^\pm(q)=\mathcal O(\delta)$. This gives \cref{eq:operator_quantisation_error_bound}. Dividing by $\Delta t$ gives \cref{eq:operator_quantisation_error_asymptotic}, and under fixed CFL scaling $\Delta t=\mathcal O(\Delta x)$ this becomes $\mathcal O(\delta/\Delta x)$.

\end{proof}

To avoid ambiguity, we distinguish the microscopic stochastic model from the implemented update.
A natural microscopic interpretation is Bernoulli transport of quanta across each interface, which yields a binomial moved count
\(
n\sim \mathrm{Binomial}(N,p)
\)
for \(N\) available quanta and move probability \(p\).
In this work, however, we do \emph{not} draw binomial samples in the update loop.
Instead, we use the deterministic mean-field closure
\begin{equation}
n \approx \lfloor Np \rfloor
\label{eq:mean_field_closure}
\end{equation}
(equivalently a rounded expected flux at the table level), i.e. transported quanta are set by the binomial expectation rather than random sampling.

Accordingly, this implementation is a deterministic conservative finite-volume scheme with integer states, not a stochastic simulator.
It also differs from a majority/threshold rule \(1[n\ge N/2]\): the present closure evolves the moved \emph{count} linearly in \(Np\), rather than applying a binary nonlinear decision.

For the microscopic binomial model, \(\mathbb{E}[n]=Np\) and \(\mathrm{Var}(n)=Np(1-p)\).
Hence for moderate/large \(Np\), relative fluctuations are small, so the closure \cref{eq:mean_field_closure} captures the macroscopic average transport.
At low occupancy (small \(N\)) or near-vacuum regions, the distinction between the microscopic binomial picture and the deterministic mean-field closure becomes most visible: the implemented scheme produces a definite integer transfer rather than sampling fluctuations. The resulting discreteness should be understood as a resolution-induced state-space effect, not as stochastic noise.

\subsection{CFL Condition, Stability, and Consistency}
\label{subsec:stability}

Let
\begin{equation}
\nu := \alpha\,\frac{\Delta t}{\Delta x},
\qquad \alpha \ge \max_u |f'(u)|,
\label{eq:cfl_number}
\end{equation}
so that \(\nu\) is a (Lax--Friedrichs) CFL number compatible with the splitting \cref{eq:flux_splitting}. Throughout, the conservative update is understood in the flux-difference form \cref{eq:update_rule_conservative_form}, which is the form used in the monotonicity and convergence arguments below.

\begin{lemma}[CFL stability, monotonicity, TVD, and \texorpdfstring{$L^1$}{L1} stability]
\label{lem:cfl_monotone}
Assume the splitting \cref{eq:flux_splitting} is chosen with \(\alpha \ge \max|f'(u)|\) and that the CFL number \(\nu\) in \cref{eq:cfl_number} satisfies \(\nu\le 1\).
Then the numerical flux
\(
F(q_L,q_R)=\phi^+(q_L)+\phi^-(q_R)
\)
induces a monotone conservative scheme
\begin{equation}
q_i^{n+1}=q_i^n-\Big(F(q_i^n,q_{i+1}^n)-F(q_{i-1}^n,q_i^n)\Big),
\label{eq:monotone_scheme_form}
\end{equation}
which satisfies:
(i) the discrete maximum principle
\(\min_i q_i^n \le q_i^{n+1} \le \max_i q_i^n\) for all \(n\);
(ii) total-variation diminishing (TVD): \(\mathrm{TV}(q^{n+1})\le \mathrm{TV}(q^n)\);
(iii) \(L^1\) stability (contraction) on a periodic grid: for any two solutions \(q^n,\,\tilde q^n\) evolved by the same scheme,
\begin{equation}
\|q^{n+1}-\tilde q^{n+1}\|_{\ell^1} \le \|q^{n}-\tilde q^{n}\|_{\ell^1},
\qquad \|v\|_{\ell^1}:=\sum_i |v_i|.
\label{eq:l1_contraction}
\end{equation}
\end{lemma}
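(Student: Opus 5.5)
The plan is to write the update \cref{eq:monotone_scheme_form} as $q_i^{n+1}=H(q_{i-1}^n,q_i^n,q_{i+1}^n)$ with
\[
H(a,b,c)=b-\phi^+(b)+\phi^-(b)+\phi^+(a)-\phi^-(c),
\]
and prove that $H$ is nondecreasing in each argument on $\mathbb Z^3$. Properties (i)--(iii) then follow from the standard theory of monotone conservative schemes.

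\textbf{Monotonicity in $a$ and $c$.} Monotonicity in the outer arguments is immediate. By \cref{eq:flux_splitting}, $f^+$ is nondecreasing and $f^-$ is nonincreasing. Multiplying by the positive constant $\Delta t/(\Delta x\,\delta)$ preserves these properties, and $\mathrm{round}$ is a nondecreasing map. Hence $\phi^+$ is nondecreasing and $\phi^-$ is nonincreasing, so $H$ is nondecreasing in $a$ and in $c$.

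\textbf{Monotonicity in $b$ (main obstacle).} The hard step is to show that $g(b):=b-\phi^+(b)+\phi^-(b)$ satisfies $g(b+1)\ge g(b)$. Set
\[
c^\pm:=\pm\bigl(f^\pm(\delta(b+1))-f^\pm(\delta b)\bigr)\tfrac{\Delta t}{\Delta x}\delta^{-1}\ge 0 .
\]
For the Lax--Friedrichs splitting $f^\pm=\tfrac12(f\pm\alpha u)$ we have $f^+-f^-=\alpha u$, so $c^++c^-=\nu\le1$ exactly. The continuous analogue of $g$ is therefore nondecreasing. After rounding, however, $\mathrm{round}(x+c)-\mathrm{round}(x)\in\{\lfloor c\rfloor,\lceil c\rceil\}$. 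Each of $\phi^+(b+1)-\phi^+(b)$ and $\phi^-(b)-\phi^-(b+1)$ then lies in $\{0,1\}$. The danger is that both jump simultaneously, giving $g(b+1)-g(b)=-1$.

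I would first try to exclude simultaneous jumps by locating the rounding thresholds of the two sequences. The two sequences share the common part $\tfrac12 f(\delta b)\,\Delta t/(\Delta x\,\delta)$ and differ by $\pm\tfrac12\nu b$. If this fails in general, I would make explicit a compatibility condition on the tables. One option is to define $\phi^-:=\Phi-\phi^+$ with $\Phi(q)=\mathrm{round}\bigl(f(\delta q)\Delta t/(\Delta x\,\delta)\bigr)$. Another is to require directly that $q-\phi^+(q)+\phi^-(q)$ be nondecreasing, i.e.\ a discrete CFL condition. I would then note that this holds under $\nu\le1$ up to the rounding caveat. This step is where I expect the statement to need sharpening.

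\textbf{Deriving (i)--(iii).} Given monotonicity of $H$, each property follows directly.

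For (i), note that $H(m,m,m)=m$ for constant states. With $m=\min_j q_j^n$ and $M=\max_j q_j^n$, monotonicity gives $m=H(m,m,m)\le q_i^{n+1}\le H(M,M,M)=M$.

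For (iii), I would use the Crandall--Tartar lemma. The scheme is conservative on the periodic grid, since $\sum_i q_i^{n+1}=\sum_i q_i^n$ by telescoping of $F_{i\pm\frac12}$. It is also order preserving: if $q\le\tilde q$ componentwise, then $H(q)\le H(\tilde q)$. A conservative, order-preserving map is an $\ell^1$ contraction. Concretely, apply the scheme to $q\vee\tilde q$, compare it with both solutions, and sum $\bigl(H(q)-H(\tilde q)\bigr)^+$ using mass conservation.

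For (ii), the scheme commutes with the shift $(Sq)_i=q_{i+1}$. Applying (iii) to $\tilde q=Sq$ then gives $\sum_i|q_{i+1}^{n+1}-q_i^{n+1}|\le\sum_i|q_{i+1}^n-q_i^n|$, which is TVD.
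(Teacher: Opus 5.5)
\textbf{The gap you flag is genuine, and the statement itself fails there.} The step you single out, monotonicity of $g(b)=b-\phi^+(b)+\phi^-(b)$, cannot be established from $\nu\le1$, because the lemma is false as stated.

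Take $f(u)=0.1\,u$, $\alpha=1$ and $\Delta t/\Delta x=0.9$, so $\nu=0.9$. The Lax--Friedrichs tables are $\phi^+(q)=\mathrm{round}(0.495\,q)$ and $\phi^-(q)=\mathrm{round}(-0.405\,q)$, independently of $\delta$. (For $f\equiv0$ one gets $\mathrm{round}(\pm0.45\,q)$, which takes the same values on the states used below.) Work on a periodic grid of $N\ge4$ cells with $q^n_j=1$ for $j\ne i$ and $q^n_i=2$. Then $F(1,1)=0$, $F(2,1)=1$ and $F(1,2)=-1$, so the triple $(1,2,1)$ around cell $i$ becomes $(2,0,2)$ and every other cell stays at $1$. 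Consequently:
\begin{itemize}
\item $\min_j q^{n+1}_j=0<1=\min_j q^n_j$, so (i) fails;
\item the total variation rises from $2$ to $6$, so (ii) fails;
\item against the steady solution $\tilde q\equiv1$ the $\ell^1$ distance rises from $1$ to $3$, so (iii) fails.
\end{itemize}
No rounding ties occur, so this does not depend on the rounding convention. Here $g(1)=1>g(2)=0$ although $c^++c^-=0.9<1$, which is exactly the simultaneous jump you were worried about.

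The paper's proof does not address this. Its Step~1 is your treatment of the outer arguments. Its Step~2 then asserts that monotonicity of $F$ together with $\nu\le1$ makes the update map monotone, citing the classical theory. That holds for the unrounded flux, where the coefficient of $q_i$ is exactly $1-\nu\ge0$, but it is false for the rounded tables.

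\textbf{Which of your remedies work.} Your first route, ruling out simultaneous jumps via the rounding thresholds, cannot succeed. Neither can the redefinition $\phi^-:=\Phi-\phi^+$: in the example $\Phi(q)=\mathrm{round}(0.09\,q)$ vanishes for $q\in\{1,2\}$, so the modified table agrees with the original on the visited states and the same violation occurs. What is required is your last option, stated as an explicit extra hypothesis on the rounded tables:
\[
\phi^+(q+1)-\phi^+(q)+\phi^-(q)-\phi^-(q+1)\le1 \quad\text{for all } q,
\]
equivalently $q\mapsto q-\phi^+(q)+\phi^-(q)$ is nondecreasing. This discrete CFL condition is not implied by $\nu\le1$.

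Under this hypothesis your argument is complete and correct, and more explicit than the paper's Step~3:
\begin{itemize}
\item $H(m,m,m)=m$ together with monotonicity of $H$ gives (i).
\item Your $q\vee\tilde q$ computation with exact conservation is a self-contained proof of the Crandall--Tartar contraction (iii).
\item Applying (iii) with $\tilde q=Sq$, using shift invariance, yields (ii). The paper only gestures at applying the monotonicity argument to forward/backward differences.
\end{itemize}
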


\begin{proof}
\textbf{Step 1 (monotonicity of the rounded tables).}
By construction, \((f^+)'\ge 0\) and \((f^-)'\le 0\).
Hence \(u\mapsto f^+(u)\) is nondecreasing and \(u\mapsto f^-(u)\) is nonincreasing.
Because \(q\mapsto \delta q\) is increasing and the rounding operator preserves order, the tabulated maps
\(q\mapsto \phi^+(q)\) are nondecreasing and \(q\mapsto \phi^-(q)\) are nonincreasing.
Therefore the numerical flux \(F(q_L,q_R)\) is nondecreasing in \(q_L\) and nonincreasing in \(q_R\).

\textbf{Step 2 (scheme monotonicity).}
Write the update in the conservative flux form \cref{eq:monotone_scheme_form}.
Under $\nu\le 1$ with $\alpha\ge \max|f'|$, the Lax--Friedrichs splitting yields a monotone two-point numerical flux in the standard sense (see, e.g., Harten's TVD framework \cite{hartenHighResolutionSchemes1997} and the order-preserving mapping theory of Crandall--Tartar \cite{crandallRelationsNonexpansiveOrder1980}). Monotonicity of $F$ then implies that the update map $q^n\mapsto q^{n+1}$ is monotone (increasing any component of $q^n$ cannot decrease any component of $q^{n+1}$).

\textbf{Step 3 (maximum principle, TVD, and \(L^1\) stability).}
Monotone conservative schemes satisfy the discrete maximum principle and are TVD.
No new extrema can be created when \(\nu\le 1\), and the TVD bound \(\mathrm{TV}(q^{n+1})\le \mathrm{TV}(q^n)\) follows by applying the same monotonicity argument to discrete forward/backward differences.
Moreover, the update map is order-preserving and mass-conserving; by the Crandall--Tartar lemma (order-preserving \(+\) conservative \(\Rightarrow\) \(\ell^1\)-nonexpansive), the scheme is \(\ell^1\) contractive, giving \cref{eq:l1_contraction} (see \cite{crandallRelationsNonexpansiveOrder1980} and standard conservation-law texts, e.g. \cite{toroRiemannSolversNumerical2009}).

\end{proof}

\paragraph{Scope of transfer-operator equivalence.}
The equivalence established in \cref{thm:transfer_operator_equivalence} concerns the case where the interface transfer rule is induced from a classical two-point numerical flux via \cref{eq:transfer_operator_equivalence_definition}. The implemented FQNM update treats the integer transfer rule as the primitive object; when this rule is obtained by quantising a classical flux, the theorem applies directly, but the framework itself does not require such a representation.

\begin{theorem}[Consistency as $\delta \to 0$]
\label{thm:consistency}
Let $u_i^n$ denote the reconstruction defined in \cref{eq:reconstruction_operator} and assume $\nu \le 1$. Then as $\delta \to 0$ with $\delta/\Delta x \to 0$, the reconstructed FQNM update is consistent with the scalar conservation law \cref{eq:conservation_law}. More precisely, the update-level residual is $\mathcal{O}(\delta)$, while after division by $\Delta t$ the corresponding operator-level residual is $\mathcal{O}(\delta/\Delta x)$ under fixed CFL scaling.
\end{theorem}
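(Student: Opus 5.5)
The plan is to reduce the theorem to \cref{prop:operator_quantisation_error} combined with the classical (Lax) consistency of the split finite-volume flux $\mathcal F(u_L,u_R)=f^+(u_L)+f^-(u_R)$. I will split the total residual of the reconstructed FQNM update, measured against the PDE \cref{eq:conservation_law}, into two parts: a quantisation part (FQNM versus the classical split scheme) and a discretisation part (the classical split scheme versus the PDE). The first part is controlled uniformly in $(\Delta x,\Delta t)$ by rounding errors. The second is the standard truncation error of a consistent two-point monotone flux.

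\textbf{Step 1: quantisation part.} Apply \cref{prop:operator_quantisation_error} directly. Writing $\phi^\pm(q)=f^\pm(\delta q)\frac{\Delta t}{\Delta x}\delta^{-1}+\varepsilon^\pm(q)$ with $|\varepsilon^\pm|\le\tfrac12$, the difference between the reconstructed increment and the classical increment is
\[
-\delta\bigl(\varepsilon^+(q_i)+\varepsilon^-(q_{i+1})-\varepsilon^+(q_{i-1})-\varepsilon^-(q_i)\bigr).
\]
This is bounded in absolute value by $2\delta$, uniformly in $i$, $n$ and the grid. This gives the update-level residual $\mathcal O(\delta)$. Dividing by $\Delta t$ and using $\Delta t=\nu\Delta x/\alpha$, which holds at fixed CFL with $\nu\le1$, gives the operator-level residual $2\alpha\delta/(\nu\Delta x)=\mathcal O(\delta/\Delta x)$.

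\textbf{Step 2: discretisation part.} Check that $\mathcal F$ is Lipschitz and consistent, i.e.\ $\mathcal F(u,u)=f^+(u)+f^-(u)=f(u)$ by \cref{eq:flux_splitting}. Then for a smooth solution $u$ sampled as cell averages, $\mathcal O_h[u]_i=-\partial_xf(u)(x_i)+\mathcal O(\Delta x)$ by Taylor expansion, using Lipschitz continuity of $f^\pm$. Combining the two steps, the local truncation error of the reconstructed FQNM scheme is $\mathcal O(\Delta x)+\mathcal O(\Delta t)+\mathcal O(\delta/\Delta x)$. This tends to zero exactly when $\delta/\Delta x\to0$, with $\delta\to0$ automatically along refining grids. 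That is the claimed consistency.

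\textbf{Main obstacle.} The delicate point is interpretive rather than computational. The rounding residual is not small relative to $\Delta t$ unless $\delta\ll\Delta x$, so one must state consistency jointly in $(\delta,\Delta x)$ and not for fixed grid. I will also note a second issue: the rounding in \cref{eq:quantisation} of the initial and reference data introduces an extra $\mathcal O(\delta)$ in the samples $u_i=\delta q_i$. This perturbs $\mathcal O_h$ only by $\mathcal O(\delta/\Delta x)$, via the Lipschitz bound on $f^\pm$, so it is absorbed into the same term. The hypothesis $\nu\le1$ is not needed for the residual bound itself. I would remark that it is retained so that consistency can be paired with \cref{lem:cfl_monotone} in the subsequent convergence argument.
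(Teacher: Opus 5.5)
Your proposal is correct and follows the paper's route. The paper's proof simply invokes \cref{prop:operator_quantisation_error} and notes that the $\mathcal O(\delta/\Delta x)$ operator-level residual vanishes when $\delta/\Delta x\to0$; that is exactly your Step~1, and your explicit constants $2\delta$ and $2\alpha\delta/(\nu\Delta x)$ are right. Your Step~2 is a real gain in completeness, since it makes explicit the classical consistency $\mathcal F(u,u)=f(u)$ of the split flux, which the paper leaves tacit. Note, however, that the $\mathcal O(\Delta x)$ truncation rate there needs $f^\pm$ smoother than Lipschitz (e.g.\ $C^{1,1}$). With only $f^\pm\in C^1$ you still get $o(1)$ consistency, which is enough for the statement.
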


\begin{proof}
This is exactly the update-level and operator-level correspondence established in \cref{prop:operator_quantisation_error}. The additional condition $\delta/\Delta x\to0$ ensures that the operator-level residual $\mathcal O(\delta/\Delta x)$ vanishes in the continuum limit under fixed CFL scaling. Hence the reconstructed FQNM update is consistent with \cref{eq:conservation_law}.
\end{proof}

\begin{remark}[What is quantised]
\label{rem:what_is_quantised}
FQNM does not merely store quantised field values. The primitive update itself is a countable-state flux-transfer rule, and \cref{prop:operator_quantisation_error} gives its reconstructed correspondence with the classical split finite-volume update.
\end{remark}

\begin{theorem}[Transfer-operator equivalence under quantisation]
\label{thm:transfer_operator_equivalence}
Let $\mathcal F^{(1)}$ and $\mathcal F^{(2)}$ be two classical numerical fluxes for the same scalar conservation law, and define the corresponding quantised interface transfer rules by
\begin{equation}
\Phi^{(k)}(q_L,q_R)
:=
\mathrm{round}\!\left(
\mathcal F^{(k)}(\delta q_L,\delta q_R)
\frac{\Delta t}{\Delta x}\delta^{-1}
\right),
\qquad k\in\{1,2\}.
\label{eq:transfer_operator_equivalence_definition}
\end{equation}
If
\begin{equation}
\Phi^{(1)}(q_L,q_R)=\Phi^{(2)}(q_L,q_R)
\label{eq:transfer_operator_equivalence_condition}
\end{equation}
for every interface state $(q_L,q_R)$ visited during the evolution, then the induced FQNM updates are identical for all timesteps from the same initial data. In particular, within FQNM the effective discrete dynamics are determined by the induced integer transfer rule rather than by the specific continuous flux formula used before quantisation.
\end{theorem}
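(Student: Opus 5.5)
The plan is to argue by induction on the time level $n$. The FQNM update induced by a transfer rule $\Phi$ is the conservative map
\[
q_i^{n+1}=q_i^n-\Bigl(\Phi(q_i^n,q_{i+1}^n)-\Phi(q_{i-1}^n,q_i^n)\Bigr),
\]
which is the form \cref{eq:update_rule_conservative_form}, with the interface count flux replaced by $\Phi^{(k)}$. Let $q^{(1),n}$ and $q^{(2),n}$ denote the trajectories generated by $\Phi^{(1)}$ and $\Phi^{(2)}$ from common initial data $q^{(1),0}=q^{(2),0}$. The inductive hypothesis is $q^{(1),n}=q^{(2),n}$ componentwise; the base case $n=0$ holds by assumption.

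For the inductive step, suppose the two states agree at level $n$. Then at every interface $i+\tfrac12$ the two evolutions present the same pair $(q_i^n,q_{i+1}^n)$. This pair is, by definition, an interface state visited during the evolution. Hypothesis \cref{eq:transfer_operator_equivalence_condition} therefore gives $\Phi^{(1)}(q_i^n,q_{i+1}^n)=\Phi^{(2)}(q_i^n,q_{i+1}^n)$ for all $i$. Substituting into the update shows that the two flux differences agree cellwise, so $q^{(1),n+1}=q^{(2),n+1}$. Since the update is a deterministic integer map (\cref{eq:mean_field_closure}: no sampling occurs), no other source of discrepancy exists. The boundary treatment, whether periodic or otherwise, is applied identically to both evolutions and so preserves the equality. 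Induction then gives agreement for all timesteps. The reconstructed observables $\delta q^{(k),n}$ coincide as well, because $\mathcal R_\delta$ is a fixed map.

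The only delicate point is the meaning of ``visited''. The set of visited interface states is itself defined along a trajectory, which makes the hypothesis look circular. I would fix this by reading the hypothesis as equality on the interface pairs visited by either evolution, or equivalently by the common evolution. The induction resolves the apparent circularity: at step $n$ we need equality only on pairs generated by states already shown to be common. A clean way to state this is to define $V$ as the set of pairs $(q_i^n,q_{i+1}^n)$ along the $\Phi^{(1)}$ trajectory and assume equality on $V$. The induction then shows that the $\Phi^{(2)}$ trajectory visits exactly $V$ as well.

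The final sentence of the theorem follows because the update depends on $\mathcal F^{(k)}$ only through the integer values $\Phi^{(k)}$. Different continuous formulas that induce the same integer rule on $V$ therefore generate the same dynamics.
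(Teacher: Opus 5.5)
Your proposal is correct and takes the same route as the paper. The update depends on the flux only through the integer interface values $\Phi^{(k)}$, so equal transfers on visited pairs and identical initial data give identical trajectories by induction on the timestep. Your explicit treatment of the apparent circularity in ``visited'' (fixing $V$ along the $\Phi^{(1)}$ trajectory and showing that the $\Phi^{(2)}$ trajectory visits exactly $V$) makes precise a point the paper leaves implicit.
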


\begin{proof}
The FQNM update depends on the numerical flux only through the integer-valued interface transfer rule. If \cref{eq:transfer_operator_equivalence_condition} holds on all interface states encountered during the computation, then the interface transfers are identical at every timestep. Starting from the same initial state, the updates therefore coincide by induction on the timestep index.
\end{proof}

\begin{remark}[Collapse of classical flux distinctions under quantisation]
\label{rem:godunov_lf_collapse}
\Cref{thm:transfer_operator_equivalence} shows that, in FQNM, the relevant computational object is the induced integer transfer rule rather than the pre-quantisation scalar flux formula. Thus distinct classical fluxes such as Godunov and Lax--Friedrichs yield identical FQNM dynamics whenever they induce the same quantised interface map on the states visited by the computation.
\end{remark}

\begin{theorem}[Convergence of the monotone quantised scheme]
\label{thm:convergence}
Assume $f\in C^1$ with bounded derivative and that the flux splitting
\cref{eq:flux_splitting} is chosen with $\alpha \ge \max|f'(u)|$.
Under the CFL condition $\nu \le 1$, the numerical flux
\(
F(q_L,q_R)=\phi^+(q_L)+\phi^-(q_R)
\)
is monotone and consistent. Consequently, the reconstructed solutions converge (along subsequences as $\delta,\Delta x,\Delta t\to0$ with fixed $\nu$ and $\delta/\Delta x \to 0$) in $L^1_{\mathrm{loc}}$ to the entropy solution of
\cref{eq:conservation_law}.
\end{theorem}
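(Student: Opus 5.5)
The plan is a Lax--Wendroff / Crandall--Majda type compactness argument. Because of rounding, the reconstructed scheme is monotone only up to an $\mathcal O(\delta)$ perturbation. I would therefore treat it as a monotone scheme plus a controlled defect, rather than appeal directly to the classical convergence theorem for exactly monotone consistent schemes.

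\textbf{Step 1 (uniform bounds).} From \cref{lem:cfl_monotone}, the integer states satisfy the maximum principle and are TVD. After reconstruction $u_i^n=\delta q_i^n$, this gives
\[
\|u^n\|_\infty\le \|u^0\|_\infty+\delta, \qquad \mathrm{TV}(u^n)\le \mathrm{TV}(u^0)+\mathcal O(\delta\,\cdot\,\#\text{jumps}).
\]
To control the second term, I would take quantised initial data $q_i^0=\mathrm{round}(u_0(x_i)/\delta)$, so that $\mathrm{TV}(q^0)\delta\le \mathrm{TV}(u_0)+\mathcal O(\delta N)$. Bounding this uniformly requires $\delta N=\mathcal O(\delta/\Delta x)\to0$ on a bounded domain, which is exactly where the hypothesis $\delta/\Delta x\to0$ enters.

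\textbf{Step 2 (time continuity and compactness).} Time continuity follows from the conservative form \cref{eq:reconstructed_flux_difference_form}. Since the count flux is Lipschitz up to rounding,
\[
\sum_i\Delta x\,|u_i^{n+1}-u_i^n|\le C\Delta t\,\mathrm{TV}(u^n)+C\Delta x\,\delta\cdot N.
\]
This is again controlled by $\delta/\Delta x\to0$ on a bounded domain, giving $L^1$ equicontinuity in time. Helly's theorem in $BV\cap L^\infty$ then yields a subsequence converging in $L^1_{\mathrm{loc}}$.

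\textbf{Step 3 (weak solution).} I would multiply the update by a test function $\varphi(x_i,t^n)\Delta x$ and sum by parts. By \cref{prop:operator_quantisation_error}, the flux term equals the classical split flux $\mathcal F$ plus an $\mathcal O(\delta/\Delta t)$ residual per interface. After summation against the test function, this residual contributes $\mathcal O(\delta/\Delta x)\to0$. Consistency of $\mathcal F$ with $f$ then gives the weak formulation, which is the Lax--Wendroff step.

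\textbf{Step 4 (entropy condition) — the main obstacle.} Monotone schemes satisfy a discrete cell entropy inequality for Kružkov entropies $|u-k|$, with numerical entropy flux $F(u\vee k,\cdot)-F(u\wedge k,\cdot)$. The difficulty is that the constant state $k/\delta$ need not be a fixed point, because of rounding: $F(k,k)\ne F(k,k)$ is not the issue, but the increment of a constant is zero in the integer scheme, so constants \emph{are} stationary.

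I would therefore try the Crandall--Majda argument directly on the integer scheme with integer $k$. Order preservation (Step 2 of \cref{lem:cfl_monotone}) together with stationarity of constants gives
\[
|q^{n+1}_i-k|\le |q^n_i-k|-(G_{i+1/2}-G_{i-1/2}),
\]
exactly in integers. Multiplying by $\delta$ and passing to the limit as in Step 3, with the rounding defect in $G$ again $\mathcal O(\delta/\Delta x)$, gives the Kružkov inequality for $k\in\delta\mathbb Z$. Density of $\delta\mathbb Z$ as $\delta\to0$, together with continuity of the inequality in $k$, extends it to all $k\in\mathbb R$.

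\textbf{Step 5 (conclusion).} Kružkov uniqueness identifies the limit as the entropy solution, so the subsequential limits are the entropy solution.
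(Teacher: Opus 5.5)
Your outline (Helly compactness, Lax--Wendroff, a Crandall--Majda cell entropy inequality, Kru\v{z}kov uniqueness) is exactly the ``standard convergence theory for monotone conservative schemes'' that the paper's proof invokes on top of \cref{lem:cfl_monotone} and \cref{thm:consistency}. Your bookkeeping of the rounding residual in Steps 2--3 is sound. The gap is that Steps 1 and 4 rely on \emph{exact} order preservation of the integer update: the maximum principle, TVD, and the Kru\v{z}kov inequality ``exactly in integers''. That property is false.

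Monotonicity of $F$ in $(q_L,q_R)$ only controls the off-diagonal dependence. You also need $q\mapsto q-\phi^+(q)+\phi^-(q)$ to be nondecreasing. In real arithmetic this is the CFL condition $\nu\le1$, but after separate rounding the map drops by one quantum whenever $\phi^+$ and $\phi^-$ jump at the same $q$. For example, take $f(u)=u/10$, $\alpha=1$, $\Delta t/\Delta x=0.9$ (so $\nu=0.9$). Then $\phi^+(q)=\mathrm{round}(0.495\,q)$ and $\phi^-(q)=\mathrm{round}(-0.405\,q)$ for every $\delta$. The stencil $(2,1,2)$ is sent to $q_i^{n+1}=1-(0-1-1-0)=3$, while $(2,2,2)$ is sent to $2$. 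A single dip $(\dots,2,2,1,2,2,\dots)$ becomes $(\dots,2,1,3,1,2,\dots)$. So order preservation and the maximum principle both fail, and the total variation rises from $2$ to $6$.

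\cref{lem:cfl_monotone}, and hence the paper's own proof, has the same defect, so citing it does not close the gap. Your opening sentence (``monotone only up to an $\mathcal O(\delta)$ perturbation'') is the correct diagnosis, but Steps 1 and 4 never act on it.

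The argument can be repaired along the lines you announce. Let $H$ be the real Lax--Friedrichs split map $H(u)_i=u_i-\frac{\Delta t}{\Delta x}\bigl(\mathcal F(u_i,u_{i+1})-\mathcal F(u_{i-1},u_i)\bigr)$. For $\nu\le1$ it is monotone, conservative and shift-invariant, and it fixes every real constant. Write $\delta q^{n+1}=H(\delta q^n)+r^n$ with $|r_i^n|\le2\delta$. Two of your estimates then go through directly:
\begin{itemize}
\item $\|u^n\|_\infty\le\|u^0\|_\infty+2\delta n$, with $2\delta n\le2\delta T/\Delta t=\mathcal O(\delta/\Delta x)$.
\item The cell entropy inequality holds for \emph{every} real $\kappa$ with an extra $+2\delta$. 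After testing this contributes only $\mathcal O(\delta/\Delta t)$, so no lattice approximation of $\kappa$ is needed.
\end{itemize}

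What is lost is the TV bound. The remainder may add $4N\delta$ to the variation each step, and $\delta/\Delta x\to0$ does not control $4N\delta\,T/\Delta t$. Replace Steps 1--2 as follows:
\begin{itemize}
\item Apply the $\ell^1$-contraction of $H$ (Crandall--Tartar) to $u^n$ and a grid translate $\tau_h u^n$. On a periodic domain this gives $\|\tau_h u^n-u^n\|_{L^1}\le\|\tau_h u^0-u^0\|_{L^1}+\mathcal O(\delta/\Delta x)$.
\item Get the time modulus from Kru\v{z}kov's interpolation lemma applied to the weak form, again up to $\mathcal O(\delta/\Delta x)$.
\item Conclude by Kolmogorov--Riesz compactness.
\end{itemize}
This is where $\delta/\Delta x\to0$ genuinely enters: it absorbs the $T/\Delta t$ accumulated per-step rounding defects of size $\mathcal O(\delta)$.
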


\begin{proof}
By \cref{lem:cfl_monotone}, under \(\nu\le 1\) the scheme \cref{eq:update_rule} is a conservative monotone flux scheme and satisfies an \(L^\infty\) maximum principle and TVD bounds.
By \cref{thm:consistency}, the reconstruction is consistent with \cref{eq:conservation_law} under the scaling $\delta/\Delta x\to0$.
Moreover, \Cref{lem:cfl_monotone} gives \(\ell^1\) stability (Crandall--Tartar contraction). Therefore the standard convergence theory for monotone conservative schemes applies: compactness, discrete entropy inequalities, and uniqueness of the entropy limit yield convergence of the reconstructed solutions to the entropy solution of \cref{eq:conservation_law} (see \cite{hartenHighResolutionSchemes1997,osherHighResolutionSchemes1984,crandallRelationsNonexpansiveOrder1980,toroRiemannSolversNumerical2009,evansPartialDifferentialEquations2010}).
\end{proof}

\subsection{Exact Conservation}
\label{subsec:exact_conservation}

Summing the conservative update over cells yields an exact telescoping identity.

\begin{lemma}[Discrete Green/Stokes identity in 1D (telescoping conservation)]
\label{lem:discrete_stokes_1d}
Let $q^{n+1}$ be obtained from $q^n$ by the conservative update \cref{eq:monotone_scheme_form} on a periodic grid.
Then the total discrete mass is exactly conserved,
\begin{equation}
\sum_{i=1}^N q_i^{n+1}=\sum_{i=1}^N q_i^n,
\label{eq:exact_conservation}
\end{equation}
and this conservation is the 1D boundary-free analogue of Green/Stokes identities: the sum of a discrete divergence (a flux difference) vanishes on a closed domain.
Moreover, any coarse observation that preserves the cell-to-cell antisymmetry of flux transfer (i.e. aggregates cells by summation) observes the same quantised invariant, so the conserved integer mass is preserved under aggregation-based coarse observation.
\end{lemma}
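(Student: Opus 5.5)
The plan is to sum the conservative update \cref{eq:monotone_scheme_form} over all cells $i=1,\dots,N$ and use periodicity to cancel the flux terms. Summing gives
\[
\sum_{i=1}^N q_i^{n+1}=\sum_{i=1}^N q_i^n-\sum_{i=1}^N\bigl(F_{i+\frac12}^n-F_{i-\frac12}^n\bigr).
\]
Periodic indexing identifies $q_{N+1}\equiv q_1$ and $q_0\equiv q_N$, so $F_{N+\frac12}^n=F_{\frac12}^n$. The flux-difference sum therefore telescopes to $F_{N+\frac12}^n-F_{\frac12}^n=0$.

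I will stress that every term is an integer, so the cancellation is exact in $\mathbb Z$. Each interface count $F_{i+\frac12}^n=\phi^+(q_i^n)+\phi^-(q_{i+1}^n)$ is computed once per interface. It is subtracted from cell $i$ and added to cell $i+1$; this is the antisymmetry of transfer. Hence no rounding residual appears anywhere, which gives \cref{eq:exact_conservation}.

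For the Green/Stokes interpretation, I will define the discrete divergence $(D F)_i:=F_{i+\frac12}-F_{i-\frac12}$. The computation above is exactly $\sum_i (DF)_i = $ boundary flux $=0$, because the periodic domain has no boundary. This is the 1D analogue of $\int_\Omega \nabla\cdot \mathbf F=\oint_{\partial\Omega}\mathbf F\cdot n$.

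For the aggregation claim, I will take any partition of the periodic index set into contiguous blocks $B_k$ and set $Q_k:=\sum_{i\in B_k} q_i$. Summing the update over a single block telescopes the interior interface fluxes, leaving
\[
Q_k^{n+1}=Q_k^n-\bigl(F_{\text{right}(B_k)}-F_{\text{left}(B_k)}\bigr).
\]
This is again a conservative integer update with antisymmetric inter-block transfers. Applying the same telescoping argument at block level gives $\sum_k Q_k^{n+1}=\sum_k Q_k^n$, and this total equals $\sum_i q_i$.

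The only step that needs care is stating precisely what "coarse observation preserving antisymmetry" means. I would formalise it as summation over contiguous blocks. For non-contiguous aggregation, total mass is still preserved trivially because the coarse total equals the fine total. Otherwise the argument is routine, since there is no analytic obstacle.
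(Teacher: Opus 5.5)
Your proof is correct and follows the paper's own argument. You sum the flux-difference update over the periodic grid so that the interface counts telescope (using $F_{N+\frac12}^n=F_{\frac12}^n$), and you handle the aggregation claim by the same telescoping over blocks; your explicit block update $Q_k^{n+1}=Q_k^n-\bigl(F_{\text{right}(B_k)}-F_{\text{left}(B_k)}\bigr)$ just spells out the paper's one-line remark on coarse-graining.
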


\begin{proof}
Summing \cref{eq:monotone_scheme_form} over $i$ and using periodicity gives
\[
\sum_i q_i^{n+1}=\sum_i q_i^n-\sum_i\big(F(q_i^n,q_{i+1}^n)-F(q_{i-1}^n,q_i^n)\big).
\]
The flux-difference sum telescopes and cancels exactly on a periodic grid, yielding \cref{eq:exact_conservation}.
For coarse-graining by aggregation, the same telescoping cancellation holds after summation over blocks of indices, hence the aggregated observable equals the original total mass.
\end{proof}

\begin{remark}[\texorpdfstring{$\delta$}{delta} is structural, not ad hoc]
\label{rem:delta_structural}
\Cref{lem:discrete_stokes_1d} shows that the preserved quantity is an integer-valued invariant induced by antisymmetric transfer.
In this sense the quantisation parameter $\delta$ in \cref{eq:quantisation} is not an observational cutoff but a structural choice that closes the arithmetic and the state space; continuum fields arise only as the limit $\delta\to 0$ under the consistency and compactness results in \cref{subsec:stability}.
\end{remark}

\begin{remark}[Why floating-point reconstruction cannot give exact transfer invariance]
\label{rem:fp_no_exact_invariance}
In a floating-point reconstruction-based scheme, even if the analytic flux is written in conservative form, the realised update is evaluated through finite-precision arithmetic. Thus each elementary operation satisfies
\[
\operatorname{fl}(a\circ b)=(a\circ b)(1+\epsilon),
\qquad |\epsilon|\le u_{\rm mach},
\]
so interface contributions that cancel algebraically need not cancel bitwise after realisation. Exact telescoping conservation is therefore not an invariant of the arithmetic substrate, but only of the ideal real-valued scheme.

In FQNM, by contrast, the update is performed in integer space:
\[
q_i^{n+1}=q_i^n-(\Phi_{i+1/2}-\Phi_{i-1/2}),\qquad \Phi_{i+1/2}\in\mathbb Z,
\]
and the global sum cancels exactly by integer arithmetic. Hence the preserved quantity is an arithmetic invariant of the computation itself, not a property recovered up to floating-point tail error.
\end{remark}

\begin{remark}[Discrete Stokes interpretation]
\label{rem:discrete_stokes_interpretation}
The update can also be read as a discrete Stokes-type identity on an integer lattice. The interface transfer \(\Phi_{i+1/2}\in\mathbb Z\) represents quantised field flow across the cell boundary, and the difference \(\Phi_{i+1/2}-\Phi_{i-1/2}\) is the corresponding discrete divergence. Since all transfers remain in the integer lattice fixed by the resolution \(\delta\), the total charge is preserved by exact telescoping rather than by approximate cancellation in real arithmetic. This geometric reading is not required for the convergence proof, but it clarifies why conservation is an invariant of the computational substrate itself: the numerical update implements integer-valued flux transport over a discrete configuration space, rather than a floating-point approximation to a continuum divergence. The analogy is closest in spirit to lattice gauge and topological discretisations, where local algebraic transport rules preserve discrete geometric structure before any continuum limit is taken \cite{creutzQuarksGluonsLattices1983,wittenQuantumFieldTheory1989,diracQuantisedSingularitiesElectromagnetic1931,bredonTopologyGeometry1993,hatcherAlgebraicTopology2002}.
\end{remark}

%
\begin{theorem}[Formal Cartesian dimensional extension by directional composition]
\label{thm:dimension_extension}
Suppose the one-dimensional FQNM update is defined by the nearest-neighbour transfer rule
\begin{equation}
q_i^{n+1}=q_i^n-\bigl(\Phi(q_i^n,q_{i+1}^n)-\Phi(q_{i-1}^n,q_i^n)\bigr).
\label{eq:oned_transfer_rule}
\end{equation}
Then for any Cartesian dimension $d\in\mathbb N$, the update extends canonically to a grid indexed by $\mathbf i$ through directional composition,
\begin{equation}
q_{\mathbf i}^{n+1}
=
q_{\mathbf i}^n
-
\sum_{m=1}^d
\Bigl(
F_{\mathbf i+\frac12 e_m}^n - F_{\mathbf i-\frac12 e_m}^n
\Bigr),
\label{eq:nd_transfer_rule}
\end{equation}
where the directional interface fluxes are defined by
\begin{equation}
F_{\mathbf i+\frac12 e_m}^n := \Phi(q_{\mathbf i}^n, q_{\mathbf i+e_m}^n),
\qquad
F_{\mathbf i-\frac12 e_m}^n := \Phi(q_{\mathbf i-e_m}^n, q_{\mathbf i}^n),
\label{eq:nd_flux_definition}
\end{equation}
and $e_m$ denotes the $m$-th Cartesian unit vector. Thus each term represents the net integer flux across the interface orthogonal to direction $m$.
Moreover:
\begin{enumerate}[label=(\roman*)]
\item the total integer mass is conserved exactly,
\item the per-step runtime remains linear in the number of cells for fixed $d$,
\item if the one-dimensional reconstructed update is consistent with the classical split flux-difference form up to $\mathcal O(\delta)$, then the $d$-dimensional reconstructed update satisfies the corresponding dimension-by-dimension split form with the same $\mathcal O(\delta)$ update-level accuracy.
\end{enumerate}
\end{theorem}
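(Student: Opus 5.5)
The plan is to treat the $d$-dimensional update \cref{eq:nd_transfer_rule} as a sum of $d$ copies of the one-dimensional flux-difference operator, one along each Cartesian direction. Each of (i)--(iii) then reduces to the corresponding one-dimensional fact already established.

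\emph{Well-definedness.} First I would check that \cref{eq:nd_transfer_rule}--\cref{eq:nd_flux_definition} is well posed. For each direction $m$ and each interface between $\mathbf i$ and $\mathbf i+e_m$, a single integer $\Phi(q_{\mathbf i}^n,q_{\mathbf i+e_m}^n)$ is defined. This integer enters the update of $\mathbf i$ as $F_{\mathbf i+\frac12 e_m}$ and the update of $\mathbf i+e_m$ as $F_{(\mathbf i+e_m)-\frac12 e_m}$. The two definitions in \cref{eq:nd_flux_definition} agree on this shared interface, which is exactly the antisymmetry property.

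Canonicity is immediate. Restricting to $d=1$, or freezing all indices except one and keeping only that direction's term, recovers \cref{eq:oned_transfer_rule}. No choices beyond $\Phi$ enter.

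\emph{(i) Exact conservation.} On a periodic box (the setting of \cref{lem:discrete_stokes_1d}) I would sum over all $\mathbf i$ and exchange the order of summation. This gives
\[
\sum_{\mathbf i}q_{\mathbf i}^{n+1}=\sum_{\mathbf i}q_{\mathbf i}^n-\sum_{m=1}^d\sum_{\mathbf i}\bigl(F_{\mathbf i+\frac12 e_m}^n-F_{\mathbf i-\frac12 e_m}^n\bigr).
\]
For fixed $m$, I would split the inner sum into one-dimensional lines parallel to $e_m$. On each line the sum telescopes exactly as in \cref{lem:discrete_stokes_1d}. Since every quantity is an integer, the cancellation is exact.

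\emph{(ii) Linear runtime.} Each cell has $2d$ incident interfaces. Each interface transfer is evaluated once, at $O(1)$ cost by table lookup or a fixed formula, and is reused by both neighbouring cells. The work per step is therefore $O(dN)$, which is linear in $N$ for fixed $d$.

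\emph{(iii) Update-level accuracy.} I would reuse the proof of \cref{prop:operator_quantisation_error} direction by direction. Write $\Phi(q_L,q_R)=\mathcal F(\delta q_L,\delta q_R)\tfrac{\Delta t}{\Delta x}\delta^{-1}+\varepsilon(q_L,q_R)$. The hypothesis that the one-dimensional reconstructed update agrees up to $\mathcal O(\delta)$ amounts to $\delta\,|\varepsilon|$ being bounded by $C\delta$; for rounding, $|\varepsilon|\le\tfrac12$. Multiplying \cref{eq:nd_transfer_rule} by $\delta$ then gives
\[
u_{\mathbf i}^{n+1}-u_{\mathbf i}^n=-\frac{\Delta t}{\Delta x}\sum_{m}\bigl(\mathcal F(u_{\mathbf i},u_{\mathbf i+e_m})-\mathcal F(u_{\mathbf i-e_m},u_{\mathbf i})\bigr)+\mathcal O(\delta).
\]
Here the remainder collects $2d$ error terms, each $\mathcal O(\delta)$, so it is $\mathcal O(d\delta)=\mathcal O(\delta)$ for fixed $d$. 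If the grid spacing differs by direction, I would replace $\Delta t/\Delta x$ by $\Delta t/\Delta x_m$ in each term without changing the argument.

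\emph{Main obstacle.} The difficulty lies in interpreting the hypothesis in (iii), not in computation. The one-dimensional statement concerns the whole update, whereas we need a per-interface error bound. I would state explicitly that the one-dimensional $\mathcal O(\delta)$ consistency is obtained, as in \cref{prop:operator_quantisation_error}, from a uniform per-interface bound on $\delta\varepsilon$, and apply that bound to each of the $2d$ interfaces. I would also note that the $\mathcal O(\delta)$ constant grows linearly in $d$. Monotonicity and CFL conditions are not claimed, since they would need $\sum_m\nu_m\le1$; I would state that this lies outside the theorem.
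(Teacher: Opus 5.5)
Your proposal is correct and takes essentially the same route as the paper. Both read the $d$-dimensional update as a sum of $d$ one-dimensional flux differences, so (i) follows by telescoping along lines (you state the periodic setting explicitly, which the paper leaves implicit in its ``internal face'' cancellation), (ii) from a bounded number of interface transfers per cell, and (iii) by summing the $d$ directional $\mathcal O(\delta)$ residuals, where your per-interface bookkeeping is equivalent to the paper's per-direction appeal because each directional term is exactly the one-dimensional three-cell flux difference along that line.
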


\begin{proof}
For each coordinate direction $m$, \cref{eq:nd_transfer_rule} applies the same one-dimensional transfer rule along the line parallel to $e_m$. Summing over all cells and all directions produces pairwise cancellation of every internal face contribution, hence exact conservation of the total integer mass. For fixed $d$, each timestep requires only a bounded number of nearest-neighbour transfers per cell, so the cost remains $\mathcal O(n)$ in the total number of cells. After reconstruction, the one-dimensional $\mathcal O(\delta)$ update correspondence applies in each coordinate direction separately; summing the $d$ directional contributions yields the stated dimension-by-dimension split form with the same quantisation order.
\end{proof}

\subsection{Complexity}
\label{subsec:complexity}

All online operations are integer addition, subtraction, and evaluation of the integer transfer map.
In the online phase, no floating-point flux evaluation is performed.

Each timestep can be implemented by first forming interface fluxes and then applying a telescoping update.
Define
\(
F_{i+\frac12}^n := F(q_i^n,q_{i+1}^n)=\phi^+(q_i^n)+\phi^-(q_{i+1}^n).
\)
Then
\(
q_i^{n+1}=q_i^n-\big(F_{i+\frac12}^n-F_{i-\frac12}^n\big).
\)
This realisation makes the per-cell arithmetic count explicit:
\begin{itemize}
\item Flux formation: evaluation of the left- and right-going integer transfer maps and 1 integer addition per interface \(i+\tfrac12\).
\item Update: 2 integer subtractions per cell (one for the flux difference, one for subtracting it from \(q_i\)).
\end{itemize}
Thus, up to boundary handling, the leading work per timestep is $\mathcal{O}(N)$ with a constant factor consisting only of integer addition, subtraction and transfer-map evaluation. This constant-factor reduction is only one part of the computational advantage. More importantly, the update has a static, branch-free execution structure: every cell follows the same integer-transfer instruction path, with no nonlinear reconstruction, limiter switching, floating-point control flow, or optimisation loop. Consequently, FQNM is not merely a cheaper stencil computation; it is a stencil whose online execution is almost completely regular.

This regularity enables near-ideal SIMD/SIMT utilisation. More strongly, the update is structurally parallel at the execution level: every cell at a fixed timestep follows an identical branch-free integer-transfer instruction path, with no reconstruction-dependent divergence, limiter switching, floating-point control flow, or optimisation loop. Although the nearest-neighbour data dependence and timestep barrier of an explicit stencil remain, the online computation is a static integer dataflow graph. Conventional high-order shock-capturing schemes, by contrast, often lose hardware efficiency precisely in the regimes where they are needed most: discontinuities and high-frequency structures trigger nonlinear weights, limiters, wider stencils, and irregular control flow. FQNM removes these mechanisms from the online update and replaces them by a fixed integer transfer map. The advantage is therefore not only a smaller arithmetic count, but a fundamentally more parallel execution model.

The formulation therefore aligns naturally with hardware built around discrete operations, and its directional decomposition in higher dimensions is compatible with straightforward parallel execution.

\noindent
The computational comparison is therefore threefold. First, relative to direct floating-point discretisations of the same conservation law, FQNM reduces runtime by eliminating repeated floating-point flux realisation and replacing it with integer-valued transfer execution. This yields lower arithmetic cost and improved hardware utilisation.
  
Second, relative to conventional stencil implementations on parallel hardware, FQNM improves execution regularity. Standard stencil schemes are spatially parallel at a fixed timestep, but their effective parallel utilisation is limited by memory traffic, synchronisation barriers, and, in high-order nonlinear schemes, branch divergence from reconstruction and limiting logic. FQNM does not remove the timestep barrier or the nearest-neighbour communication pattern, but it makes the per-cell computation uniform and branch-free. In this sense, the method enables effectively full parallel utilisation at the execution level, subject only to the usual bandwidth and boundary-exchange constraints of explicit local schemes. Integer arithmetic also avoids floating-point normalisation, rounding, and exception handling, giving lower-latency and more predictable execution on modern hardware.

Third, relative to high-resolution shock-capturing schemes such as WENO, the comparison is not one of formal order but of structural behaviour. In the tested regimes, FQNM maintains less diffusive grid-level shock profiles while incurring significantly lower computational cost, due to the absence of nonlinear reconstruction and floating-point diffusion mechanisms. The important point is the conjunction: the method is not merely faster at the expense of structure, nor more structure-preserving at the expense of speed. Both advantages arise from the same integer-transfer representation.

Thus the reduction in computational cost is not merely empirical but follows from the arithmetic structure of the method. The observed speedups in nonlinear regimes are therefore consistent with the theoretical simplification of the update rule, rather than arising from implementation-specific optimisation.

This also explains why the benefit is expected to be largest in nonlinear conservative regimes. For smooth low-frequency transport, conventional baselines are already close to bandwidth-limited optimal implementations. In contrast, shock-capturing or high-frequency regimes require additional reconstruction, limiting or nonlinear interface logic in floating-point schemes. FQNM removes these layers from the online update, reducing the core arithmetic to a fixed integer transfer map with exact telescoping conservation.

This also clarifies the distinction from PINN-type approaches. While such methods enforce PDE structure through optimisation, they require training and dense matrix operations at inference. FQNM instead preserves the analytic flux explicitly and executes it through a discrete transfer operator, with no learned surrogate and no optimisation loop.

\par

To make the structural reduction explicit, we compare the per-cell arithmetic against a minimal baseline. Even a first-order upwind finite-volume update requires evaluation of the flux function, floating-point multiplications and additions, and accumulation subject to rounding. In contrast, the FQNM update replaces this pipeline by integer transfer-map execution and a fixed number of integer additions/subtractions. Thus, beyond eliminating higher-order reconstruction and limiter logic, the method reduces the arithmetic substrate even relative to the simplest monotone scheme. This confirms that the improvement is not a consequence of removing sophisticated components, but of replacing the floating-point evaluation of the flux operator itself by an integer-valued transfer rule.

\begin{table}[t]
\centering
\caption{\textbf{Representative online arithmetic structure per cell.} The comparison is against a naive first-order upwind finite-volume update, not a high-order reconstruction scheme. Exact cycle counts are architecture-dependent; the point is the structural distinction between floating-point flux evaluation and integer-valued transfer execution.}
\label{tab:alu_structure}
\begin{tabular}{llll}
\toprule
Operation class & Naive upwind FV & FQNM & Consequence \\
\midrule
Flux evaluation & Floating-point \(f(u)\) & Integer transfer map & Removed as FP operation \\
Arithmetic core & FP add/multiply & Integer add/subtract & Lower-latency path \\
Rounding exposure & Per FP operation & Table construction only & No online drift accumulation \\
Branching & Low, but flux-dependent & None in update rule & Static dataflow \\
Conservation & Real-arithmetic telescoping & Integer telescoping & Bitwise exact mass \\
\bottomrule
\end{tabular}
\end{table}

This ALU-level comparison is deliberately made against the weakest conventional baseline. The acceleration reported below is therefore not explained by removing WENO reconstruction, limiter switching, or Riemann-solver complexity. Those mechanisms increase the gap further, but they are not necessary for the gap to appear. The gap already exists between floating-point upwind flux evaluation and integer transfer execution, which is why an order-of-magnitude speedup at the NumPy prototype level is evidence of representation-level arithmetic collapse rather than kernel-specific tuning. This is consistent with the broader observation in accelerator design that memory movement and floating-point realisation often dominate raw arithmetic cost, and that low-precision or integer execution can yield large performance and energy gains when the computational graph is structurally compatible with it \cite{hanEIEEfficientInference2016,buiRealtimeHamiltonjacobiReachability2021,ebrahimiEvaluationFPGAHardware2017}.

Hence runtime per step is
\begin{equation}
T(n) = \mathcal{O}(n).
\label{eq:complexity}
\end{equation}

\subsection{Scope and limitations}
\label{subsec:scope_limitations}

The present formulation is developed and analysed for conservative hyperbolic dynamics, where the evolution can be expressed as local antisymmetric transfer and invariants arise through exact telescoping cancellation. This setting provides the natural starting point for the quantised interaction viewpoint, as conservation laws admit a direct interpretation in terms of countable-state transfer.

This focus should be understood as a choice of setting rather than a fundamental limitation of the representation. The underlying framework operates at the level of transfer rules on a discrete state space, and its applicability is therefore determined by whether a given operator admits a suitable transfer realisation.

Diffusive dynamics, such as the heat equation, can in principle be expressed through symmetric transfer mechanisms within the same framework. Preliminary experiments indicate that the quantised-transfer viewpoint extends beyond purely hyperbolic transport in such cases, although a systematic analysis is not pursued here.

Elliptic constraints, including pressure projection in incompressible flow, do not naturally admit a purely local transfer representation of the type used in the present work, and therefore require a different construction. However, this reflects a difference in operator structure rather than a restriction of the quantised formulation itself: representing such operators within a discrete transfer framework remains a question of transfer-rule design.

The present work therefore does not aim to provide a universal PDE solver. Instead, it establishes that an important class of conservative dynamics can be realised directly as quantised interaction rules, with continuum behaviour emerging as a reconstructed observable. Extensions to broader operator classes are left as future developments of the same representational principle.

\section{Numerical Experiments}
\label{sec:numerics}

We report the simulations analysed in this manuscript. The empirical section focuses on three representative one-dimensional benchmarks: high-frequency Gaussian packet transport for linear advection, inviscid Burgers shock formation, and the Sod shock tube for the compressible Euler system. In the Gaussian transport test, the floating-point comparison baseline is WENO5+RK3, following the high-order WENO constructions and variants in \cite{haModifiedEssentiallyNonoscillatory2013,haSixthorderWeightedEssentially2016,haImprovingAccuracyFifthOrder2021,zhuNewTypeMultiresolution2019,sungTroubledcellIndicatorBased2020}. In the Burgers test, the comparison is made against a standard finite-difference baseline together with a dense-grid entropy reference. In the Sod test, the comparison is made under the same Roe-type approximate Riemann flux structure, isolating the effect of quantised conservative transfer at the Euler-system level.

These experiments are intended as focused validation in complementary regimes---oscillatory transport near the grid Nyquist limit, nonlinear scalar shock formation in Burgers dynamics, and coupled system-level shock propagation in the Euler equations---rather than as a broad benchmark survey. They deliberately separate high-frequency content from nonlinear system-level shock interaction: the Gaussian packet test isolates oscillatory transport, the Burgers test isolates scalar shock formation, and the Sod test isolates coupled Euler shock propagation under matched Roe-flux conditions. We therefore do not claim here to resolve an oscillatory nonlinear system in a single benchmark; such coupled shock--oscillation tests remain a natural next stress case. The purpose of the present set is to isolate the operator-quantisation effect across the main ingredients separately.

We also report wall-clock timings from the current prototype in nonlinear shock-dominated regimes. These timings are not used as the sole evidence for the method, but they are important because they test whether the arithmetic simplification predicted in \cref{subsec:complexity} appears in actual execution. In Burgers-type nonlinear benchmarks, the integer-transfer implementation achieves approximately \(10\text{--}13\times\) acceleration over a floating-point finite-difference baseline at matched resolution, while preserving the grid-level shock structure and exact discrete conservation. This places the speedup in precisely the regime where conventional floating-point schemes incur their largest reconstruction, stencil, and control-flow overheads.

Importantly, this acceleration is not attributable to low-level kernel optimisation. The comparison is made at the level of high-level array execution, and the arithmetic reduction already appears when the reference is a naive first-order upwind finite-volume update rather than a high-order shock-capturing method. Direct ALU-level inspection shows that the classical upwind update still requires floating-point flux evaluation, arithmetic combination, and rounding-sensitive accumulation, whereas FQNM reduces the online per-cell update to integer transfer-map evaluation and antisymmetric transfer. This establishes that the observed speedup is structural: it arises from eliminating floating-point flux realisation and collapsing the arithmetic pipeline, rather than from hardware-specific tuning. The fact that an order-of-magnitude acceleration already appears in a high-level NumPy implementation therefore provides a lower bound on the achievable performance gain under the change of representation.

\par
It is important to emphasise that the present experiments do not rely on precomputed lookup tables. The key distinction is the quantisation of the flux operator itself, not the use of table-based execution.

In the floating-point formulation, the operator is realised through repeated evaluation of the continuum flux on reconstructed field values. In the quantised formulation, the same operator is executed as an integer-valued transfer rule acting on discrete states. Thus the comparison isolates the effect of arithmetic realisation rather than introducing a different numerical scheme.

Lookup-based execution is a possible optimisation enabled by the discrete state space, but it is not required for the structural behaviour observed here. The less diffusive shock localisation and the reduction in arithmetic complexity arise from the quantised transfer representation itself.

\subsection{High-frequency transport stress test}
\label{subsec:hf_stress}

To probe dispersion and diffusion failure under severe resolution constraints, we advect a high-frequency Gaussian wave packet and compare three final-time profiles: the exact shift solution, FQNM, and a floating-point high-order baseline (WENO5+RK3). We sweep the normalised frequency $k_0\Delta x$ and report the relative $L^2$ error against the exact solution. For each value of $k_0\Delta x$, we aggregate multiple $(n_x,f)$ settings that realise the same normalised frequency and summarise the resulting error distribution by its median together with a min--max envelope.

The point of this test is structural rather than merely empirical. Near the Nyquist limit, classical high-order reconstructions become increasingly sensitive to dispersion and phase error, whereas the FQNM update acts directly through local conservative transfer on quantised states. The comparison therefore probes whether that interaction-rule formulation remains stable in a regime where floating-point field reconstructions visibly deteriorate.

\begin{figure}[t]
\centering
\IfFileExists{figs/bandplot_kdx.png}{%
  \includegraphics[width=0.92\linewidth]{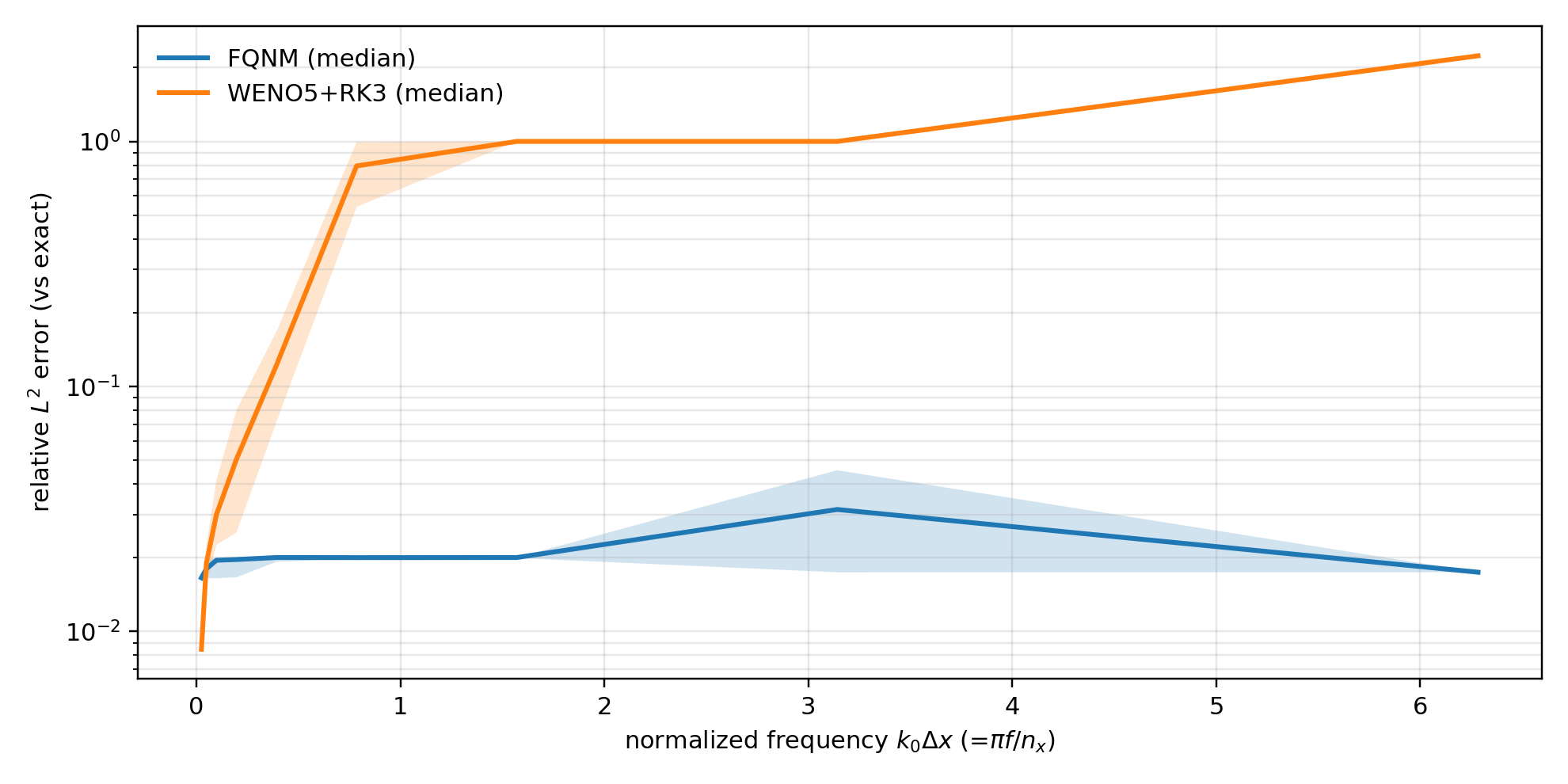}%
}{%
  \fbox{\parbox{0.88\linewidth}{\centering Placeholder: `figs/bandplot_kdx.png` not found at compile time. Restore the Nyquist-regime band-summary figure to this path.}}%
}
\caption{\textbf{High-frequency transport band summary.} Relative $L^2$ error as a function of normalised frequency $k_0\Delta x$. FQNM remains accurate as the Nyquist regime is approached, whereas the floating-point WENO5+RK3 baseline shows rapidly increasing error. Bands show the min--max envelope and the central line shows the median across matched $(n_x,f)$ settings.}
\label{fig:bandplot_kdx}
\end{figure}

\subsection{Nonlinear Burgers shock regime}
\label{subsec:burgers}

To assess nonlinear robustness beyond linear transport, we consider the inviscid Burgers equation and summarise the comparison with a single representative snapshot in \cref{fig:burgers_shock_zoom}.
\[
\partial_t u + \partial_x \left( \frac{u^2}{2} \right)=0
\]
with smooth initial data
\[
 u_0(x)=0.5+\sin(2\pi x),
\]
evolved past the shock formation time.

We compare FQNM against a standard finite-difference baseline using a representative shock snapshot, and also against a dense-grid entropy reference through separate diagnostic evaluation. This Burgers test serves as the nonlinear one-dimensional benchmark in the paper.

In the same nonlinear regime, the prototype integer-transfer implementation shows a wall-clock speedup of roughly \(11\times\) over the floating-point finite-difference baseline. Earlier benchmark sweeps on Burgers and related one-dimensional viscous transport tests gave representative speedups of approximately \(10.9\times\) and \(11.1\times\), respectively. We use these values only as prototype-level timing evidence; the central point is that the largest acceleration appears in the shock-forming nonlinear regime where the structural simplification of FQNM is expected to matter most.

\begin{figure}[t]
\centering
\resizebox{0.9\linewidth}{!}{%
  \IfFileExists{figs/burgers_shock_zoom.pgf}{%
    \input{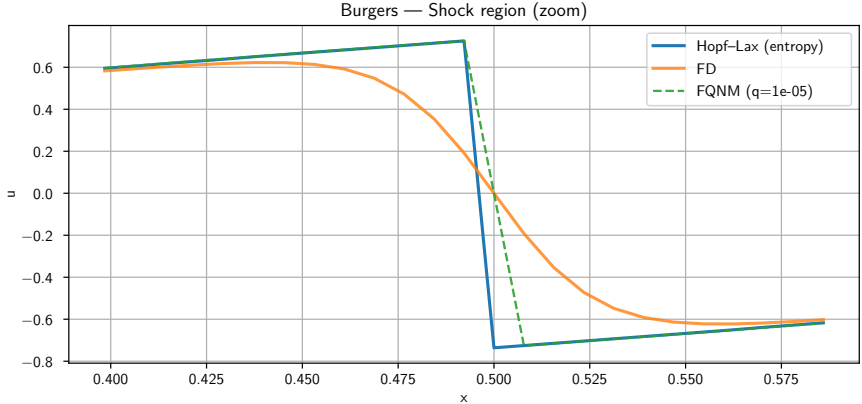}%
  }{%
    \fbox{\parbox{0.82\linewidth}{\centering Placeholder: `figs/burgers_shock_zoom.pgf` not found at compile time. Rebuild the figure file or restore it to the `figs/` directory.}}%
  }%
}
\caption{\textbf{Burgers shock comparison (single snapshot).} Representative Burgers shock profile comparison between a standard finite-difference baseline and FQNM at a matched resolution and final time. Relative to the finite-difference baseline, FQNM preserves shock location while exhibiting reduced numerical diffusion. The offset initial data induce a pinned grid-level structure, and the comparison shows that FQNM is more robust to one-cell shock displacement (cell drifting) while maintaining exact discrete conservation.}
\label{fig:burgers_shock_zoom}
\end{figure}

For the inviscid Burgers case, we additionally compare both numerical schemes against a Hopf--Lax entropy reference evaluated on a dense auxiliary grid. The Hopf--Lax minimisation uses $N_y = 2048$ candidate points in $y$, whereas all reported profiles are sampled and compared on the simulation grid with $N_x = 128$. This dense-grid entropy reference is used to separate continuum entropy admissibility from grid-level placement effects in the discrete profiles.

A subtle but important point concerns the offset structure of the initial data. The constant shift fixes a reference level that should remain pinned under the Burgers evolution, including after shock formation. In the discrete simulations, FQNM preserves this pinned grid-level structure through the integer state evolution itself, whereas reconstruction-based references and floating-point baselines can exhibit one-cell placement ambiguity near the shock. This does not indicate a failure of the entropy solution; it is a sampling artefact of projecting the continuous minimisation onto an even grid, where a shock located exactly between two cells must be assigned to one adjacent grid point. Here, `cell drifting' refers to a one-cell displacement of the sampled shock relative to that pinned grid-level offset structure. Against this backdrop, the comparison is not a claim that FQNM outperforms the entropy solution itself. The point is sharper: FQNM preserves a discrete structural invariant of the quantised evolution while simultaneously delivering the observed wall-clock acceleration, a combination that is not available to floating-point reconstruction-based updates in this regime.

In addition to conservation metrics, we measured the discrete entropy defined in \cref{eq:discrete_entropy} during the inviscid Burgers simulations. After shock formation, the empirical entropy $S^n$ increases in regimes where the induced level-transition matrix is close to bistochastic. In the present mean-field implementation (\cref{eq:mean_field_closure}), this behaviour should be interpreted as deterministic redistribution in a countable-state conservative scheme, rather than noise-induced stochastic stabilisation.

\subsection{System-level Sod shock tube under Roe flux quantisation}
\label{subsec:sod_system_level}

We next test whether the same quantised-transfer principle remains effective for a coupled hyperbolic system rather than a scalar conservation law. For this purpose, we use the one-dimensional Sod shock tube for the compressible Euler equations. Both trajectories are run under the same initial data, grid, CFL condition, final time, and Roe-type approximate Riemann flux structure. The distinction is not a change of Riemann solver: both solvers use the same Roe flux physics. The difference is in the execution layer. The reference trajectory evolves floating-point conservative variables through a Roe flux-difference update, whereas the FQNM trajectory converts the same Roe flux into an integer-valued conservative transfer update on quantised conservative states.

In the full system formulation, the quantised-transfer construction can be applied componentwise to all conservative variables \((\rho,m,E)\). In the prototype reported here, however, we isolate the density channel: operator quantisation is imposed at the density-transfer level, while velocity and pressure are reconstructed from the coupled conservative state rather than treated as independently quantised primitive variables.

Thus this experiment does not test the strongest possible system quantisation, in which density, momentum, and energy transfers are all quantised simultaneously. Instead, it asks a more restrictive question: whether quantising the density transfer operator alone can reduce the effective artificial-viscosity signature seen in the shock structure. Velocity and pressure remain reconstructed observables and therefore still inherit the diffusive characteristics of the underlying Roe flux.

Despite this restricted setup, the experiment shows that density-channel operator quantisation alone is already sufficient to improve the structural localisation of the shock and to reduce the visible artificial-viscosity effect in the density profile.

This experiment should therefore be read as a system-level arithmetic-realisation test with an exact Riemann reference included for calibration. It does not introduce a different physical flux. Rather, the same Roe flux structure is realised through quantised transfer.

Because velocity and pressure are reconstructed rather than independently quantised, they remain subject to the diffusive and artificial-viscosity characteristics of the Roe flux. In particular, no claim is made that numerical diffusion is uniformly reduced across all variables or that the result exhausts the behaviour of a fully componentwise-quantised Euler system.

The key observation is instead structural: even density-channel operator quantisation alone can preserve and sharpen the shock transition at the conserved-density level. The improvement should therefore be interpreted as a consequence of how the density transfer is realised, rather than as a uniform accuracy gain in the full primitive-variable system.

\begin{figure}[t]
\centering
\resizebox{0.92\linewidth}{!}{%
  \IfFileExists{figs/sod_fd_vs_fqnm_full.pgf}{%
    \input{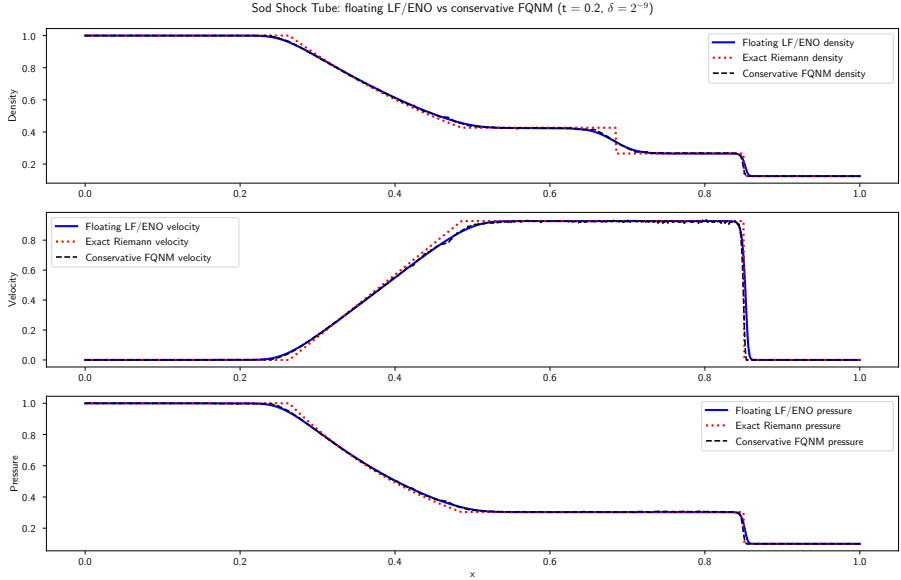}%
  }{%
    \fbox{\parbox{0.86\linewidth}{\centering Placeholder: `figs/sod_fd_vs_fqnm_full.pgf` not found at compile time. Restore the full-domain Sod FD--FQNM comparison PGF to this path.}}%
  }%
}
\caption{\textbf{System-level Sod shock tube: full-domain density profile.} Floating-point Roe reference and FQNM are compared under identical initial data, grid, CFL condition, final time, and Roe-type approximate Riemann flux structure. The difference lies solely in the execution layer: the reference evolves floating-point conservative variables, whereas FQNM realises the same Roe-induced transfer through a quantised conservative update. The exact Sod density profile is shown as a calibration reference. In this prototype setting, quantising only the density transfer operator already preserves and sharpens the density shock structure, even though velocity and pressure are reconstructed quantities and remain subject to the diffusive characteristics of the Roe flux.}
\label{fig:sod_fd_fqnm_full}
\end{figure}

\begin{figure}[t]
\centering
\resizebox{0.92\linewidth}{!}{%
  \IfFileExists{figs/sod_fd_vs_fqnm_zoom.pgf}{%
    \input{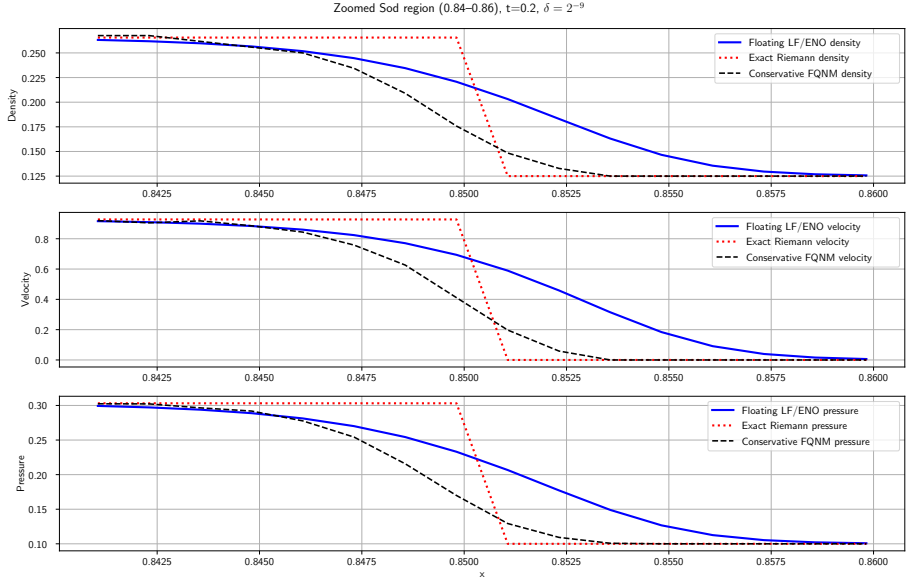}%
  }{%
    \fbox{\parbox{0.86\linewidth}{\centering Placeholder: `figs/sod_fd_vs_fqnm_zoom.pgf` not found at compile time. Restore the zoomed Sod FD--FQNM comparison PGF to this path.}}%
  }%
}
\caption{\textbf{Zoomed Sod shock region.} Magnification of the density profile over the interval $x\in[0.84,0.86]$. This view isolates the shock transition under identical Roe flux structure. The comparison highlights that quantising the density transfer operator alone reduces the visible artificial-viscosity effect in the density shock profile, while velocity and pressure remain reconstructed variables influenced by Roe-type numerical diffusion. The figure is intended to illustrate structural behaviour rather than a uniform accuracy improvement across all variables.}
\label{fig:sod_fd_fqnm_zoom}
\end{figure}

\section{Discussion}
\label{sec:discussion}

The central point of FQNM is representational. Classical conservation-law solvers evolve reconstructed floating-point fields and enforce structure through additional layers: conservative flux form, Riemann solvers, nonlinear reconstruction, limiters, or optimisation. FQNM instead executes the conservative transfer itself as the primitive operation. The continuum field is recovered only after the integer transfer dynamics have been applied.

This distinction is not merely semantic. Exact conservation in FQNM is an invariant of the arithmetic substrate. In a reconstruction-based floating-point method, conservation holds at the level of the ideal real-valued flux difference, while the realised computation is subject to finite-precision cancellation. In FQNM, the update is an integer-valued antisymmetric transfer, so global conservation follows by exact telescoping in the computation actually performed. This is the one-dimensional boundary-free analogue of Green/Stokes cancellation in discrete transfer form.

The same representation also explains the computational reduction. The online update does not evaluate a floating-point flux on reconstructed states. It applies a fixed integer transfer map. Consequently, the arithmetic graph of the update collapses from functional evaluation and reconstruction to nearest-neighbour integer transfer. This removes floating-point flux evaluation, limiter logic, nonlinear control flow, and reconstruction-dependent branching from the executed update. The resulting acceleration is therefore not a kernel optimisation of a conventional stencil, but a consequence of changing the realised operator.

This point is visible even against the weakest conventional baseline. A naive first-order upwind finite-volume update still evaluates a floating-point flux, combines floating-point quantities, and accumulates rounding-sensitive updates. FQNM replaces this arithmetic substrate by integer transfer-map execution and exact antisymmetric cancellation. Thus the observed order-of-magnitude prototype speedup in Burgers-type nonlinear regimes should not be interpreted as merely avoiding WENO reconstruction or high-order machinery; the arithmetic gap already appears before those higher-order layers are introduced.

The shock behaviour should be read in the same way. FQNM is not a more finely tuned shock-capturing scheme. In floating-point reconstruction-based methods, shocks are represented through steep reconstructed gradients and stabilised through Riemann, limiting, or entropy mechanisms. In FQNM, discontinuities appear as abrupt changes in the local transfer pattern. Shock localisation, exact conservation, and branch-free execution are therefore not separate algorithmic devices; they are consequences of the same integer transfer representation.

The Burgers experiment demonstrates this scalar mechanism. The offset initial data induce a pinned grid-level structure, and FQNM preserves this discrete structural invariant while also showing the expected wall-clock acceleration. This should not be interpreted as outperforming the continuum entropy solution. Rather, it shows that the quantised transfer dynamics preserve a structural feature of the discrete state evolution that is not preserved in the same way by reconstruction-based floating-point updates.
The Sod shock-tube comparison provides a prototype system-level test of the same principle. Both trajectories use the same initial data, grid, CFL condition, final time, and Roe-type approximate Riemann flux structure, and the exact Sod solution is included as a calibration reference. The distinction is not the physical flux law or the Riemann solver, but the execution layer: the reference trajectory evolves floating-point conservative variables through a Roe flux-difference update, whereas the FQNM trajectory realises the same Roe-induced transfer as an integer-valued conservative update.

In this implementation, the full conservative system could in principle be quantised componentwise, but the reported prototype isolates the density-transfer channel. Velocity is reconstructed from the coupled conservative state rather than quantised as an independent primitive variable.

As a result, velocity and pressure remain subject to the numerical diffusion and artificial-viscosity mechanisms of the underlying Roe flux. The observed improvement in shock structure therefore does not arise from fully quantising all primitive variables.

Instead, the key point is that quantisation of the density transfer operator alone is sufficient to stabilise and sharpen the density shock profile, reducing the visible artificial-viscosity effect even before full componentwise system quantisation is used. This indicates that a substantial part of the structural behaviour of the shock is controlled by the discrete transfer of conserved density, rather than by the detailed floating-point reconstruction of derived variables.

The transfer-operator equivalence theorem makes this viewpoint precise. Once two pre-quantisation flux formulae induce the same integer interface transfer map on the states visited by the computation, their FQNM trajectories are identical. The effective computational object is therefore not the analytic flux expression prior to quantisation, but the realised transfer operator. This collapse of classical flux distinctions is one of the clearest signatures that FQNM is not simply a low-precision finite-volume implementation.

The entropy construction is treated separately in Supplementary Information as an interpretive statistical diagnostic. Conservation fixes the total integer mass, while nonlinear transfer redistributes occupation over quantised levels. The empirical level-transition matrix describes mixing in level space, not permutation of grid points. When this induced level mixing is close to bistochastic, the discrete Shannon entropy increases by majorisation. This appendix-level interpretation is not used in the consistency, stability, or convergence arguments above.

Taken together, the results support four main conclusions. First, FQNM makes conservation an exact arithmetic invariant through antisymmetric integer transfer. Second, it changes the computational primitive from field reconstruction to operator transfer. Third, this change collapses the online arithmetic graph to a static branch-free integer update, explaining the observed prototype speedups. Fourth, the same transfer representation preserves shock-relevant discrete structure in scalar tests and remains effective in a matched Roe-flux Sod prototype.

The limitations are equally important. The present theory is established for scalar hyperbolic conservation laws with monotone split fluxes. The Sod test is a prototype system-level arithmetic-realisation experiment, not a complete convergence theory for compressible Euler. Diffusive, elliptic, source-driven, and multiphysics systems require transfer representations adapted to their operator structure. The quantisation scale \(\delta\) also introduces a practical trade-off between resolution, state-space size, and memory footprint.

Thus the main implication is not that FQNM is a universal PDE solver. It is that, for conservative hyperbolic dynamics, the object chosen for computation can be changed. Replacing reconstructed field evolution by direct execution of the conservative transfer operator can alter both structural fidelity and arithmetic cost at the same time.

\paragraph{Practical applicability and limitations.}

The present formulation is most directly applicable to conservative hyperbolic problems in which the dynamics can be expressed as local flux transfer. This includes compressible flow, nonlinear wave propagation, and transport-dominated regimes where shock formation and high-frequency structure dominate computational cost. In such settings, the integer-transfer realisation can replace floating-point flux evaluation without modifying the governing law, offering a direct path to reduced arithmetic cost and improved hardware efficiency.

From an implementation perspective, the method is compatible with standard finite-volume infrastructures: the required modifications are confined to the flux evaluation layer, which is replaced by a quantised transfer rule. This suggests that FQNM can be incorporated into existing solvers without redesigning grid structures, boundary handling, or time integration frameworks.

However, the applicability is currently limited by the availability of suitable transfer representations. Diffusive operators require symmetric transfer constructions, and elliptic constraints introduce nonlocal coupling that is not captured by the present local interaction rule. In addition, the choice of quantisation scale $\delta$ introduces a trade-off between resolution and state-space size, which may affect memory footprint and table construction in high-dimensional or multi-physics settings.

Therefore, while the results demonstrate that operator-level quantisation can provide substantial advantages in shock-dominated conservative regimes, extending this approach to general PDE systems remains an open problem requiring further development of transfer-based operator representations.

\section{Conclusion}
\label{sec:conclusion}

This work introduces a representation in which conservative hyperbolic dynamics are realised as integer-valued transfer on a countable state space, with continuum fields appearing only as reconstructed observables. The resulting scheme preserves conservation exactly at the arithmetic level and reduces the computational structure of the update to a fixed transfer rule.

Across representative regimes, this representation maintains structural fidelity while reducing computational complexity. In particular, under matched Roe-flux conditions in the Sod shock-tube system test, the quantised transfer realisation produces a less diffusive shock transition and reduced density error relative to the exact reference within the tested prototype setting.

These results should be interpreted as evidence for a system-level effect of operator realisation rather than as a general claim of convergence or superiority for fully quantised multi-variable systems. The present work establishes that, even under a single density-scale conservative-state quantisation, operator-level structure can be preserved in nonlinear and system-level dynamics.

This suggests that, for conservative hyperbolic problems, the choice of operator representation can be as important as the choice of numerical flux or reconstruction, and that quantised transfer provides a viable alternative execution paradigm for such systems.

\bibliography{sn-bibliography.used}
\newpage
\section*{Supplementary Information}
\label{sec:si}
\setcounter{theorem}{0}
\renewcommand{\thetheorem}{S\arabic{theorem}}
\subsection*{Entropy and Statistical Structure}
\label{app:entropy}
The entropy construction in this appendix is used only as an interpretive statistical diagnostic for quantised transfer dynamics. It is not part of the consistency, stability, or convergence theory in the main text.

\subsection*{Discrete Entropy Functional}
\label{subsec:entropy_functional}

Let
\begin{equation}
p_k = \frac{1}{N} \#\{ i \mid q_i = k \},
\label{eq:pk_definition}
\end{equation}
where $N$ is the number of grid points. Define
\begin{equation}
S = - \sum_k p_k \log p_k.
\label{eq:discrete_entropy}
\end{equation}

The entropy \cref{eq:discrete_entropy} is the Shannon entropy of the empirical level distribution induced by the quantised state configuration. Because the local states are explicitly countable, this provides a natural coarse-grained measure of level mixing in the conservative dynamics \cite{shannonMathematicalTheoryCommunication1948,jaynesInformationTheoryStatistical1957,coverElementsInformationTheory2006}.

\begin{proposition}[Log-count structure: Boltzmann and Shannon as the same principle]
\label{prop:log_count}
Let $p$ be the empirical level distribution \cref{eq:pk_definition} on a finite set of occupied levels.
(i) If $p$ is uniform on $m$ occupied levels, then $S=\log m$.
(ii) More generally, define the \emph{effective number of occupied levels} by
\begin{equation}
N_{\mathrm{eff}}(p):=\exp(S(p)).
\label{eq:neff}
\end{equation}
Then the Shannon entropy can be written as the logarithm of an effective count: $S=\log N_{\mathrm{eff}}$.
In particular, when the microscopic state space is uniformly accessible on a finite set $\Omega$ of size $|\Omega|$, the Boltzmann form $S_B = k\log|\Omega|$ and the coarse-grained Shannon form $S= -\sum_k p_k\log p_k$ are the same log-count principle applied at different resolutions.
\end{proposition}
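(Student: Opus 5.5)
The plan is to verify the two parts of Proposition \ref{prop:log_count} directly from the definition \cref{eq:discrete_entropy}. Part (ii) is essentially a definition, so it is not where the content lies; the substance is part (i), together with a precise reading of the Boltzmann--Shannon identification.

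For (i), suppose $p$ is uniform on $m$ occupied levels. Then $p_k = 1/m$ for exactly $m$ values of $k$, and $p_k = 0$ otherwise. I will adopt the standard convention $0\log 0 = 0$, justified by $\lim_{x\to0^+}x\log x=0$, so that unoccupied levels contribute nothing. The sum in \cref{eq:discrete_entropy} then reduces to
\[
S=-\sum_{k\ \mathrm{occupied}}\tfrac1m\log\tfrac1m=\log m .
\]
Note that the empirical structure \cref{eq:pk_definition} forces $m\le N$ and $p_k\in\{0,1/N,\dots,1\}$. Hence uniformity on $m$ levels requires $m\mid N$, but this plays no role in the computation.

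For (ii), with $N_{\mathrm{eff}}$ defined by \cref{eq:neff}, the identity $S=\log N_{\mathrm{eff}}$ follows because $\log$ and $\exp$ are mutually inverse on $\mathbb R$. I will add two remarks to give it content:
\begin{itemize}
\item $S\ge 0$, since $-p_k\log p_k\ge0$ for $p_k\in[0,1]$. Hence $N_{\mathrm{eff}}\ge 1$.
\item By Jensen's inequality applied to the concave function $\log$ on the occupied support $M$,
\[
S=\sum_k p_k\log(1/p_k)\le \log\sum_{k\in M}p_k\cdot\tfrac1{p_k}=\log|M|.
\]
Hence $1\le N_{\mathrm{eff}}\le m$, with equality on the right exactly in case (i), which is the equality case of Jensen.
\end{itemize}
This makes precise the sense in which $N_{\mathrm{eff}}$ is an effective count: it interpolates between a single occupied level and uniform occupation of all $m$ levels.

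For the Boltzmann statement, I will take \emph{uniformly accessible on $\Omega$} to mean the distribution $p_\omega=1/|\Omega|$ for $\omega\in\Omega$. Part (i), applied with the microstates as the levels, then gives $-\sum_\omega p_\omega\log p_\omega=\log|\Omega|$. Multiplying by Boltzmann's constant $k$ yields $S_B$, so the two forms coincide up to the unit constant. The phrase \emph{at different resolutions} will be justified by the grouping (chain) rule. If microstates are partitioned into coarse levels $k$ with $|\Omega_k|$ microstates each, and the distribution is uniform on $\Omega$, then
\[
\log|\Omega| = S(p)+\sum_k p_k\log|\Omega_k|, \qquad p_k = |\Omega_k|/|\Omega| .
\]
Thus the coarse Shannon entropy is the fine log-count minus the average within-level log-count.

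The main obstacle is interpretive rather than technical. The last sentence of the proposition is not a sharply defined mathematical claim, so the step requiring care is fixing the hypothesis (uniform measure on $\Omega$) and the coarse-graining map under which the identification is a theorem. I will use the chain-rule identity above as that precise content, and keep the $0\log0$ convention explicit throughout.
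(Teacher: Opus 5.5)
Your treatment of (i) and (ii) matches the paper's proof: you evaluate $-\sum_k p_k\log p_k$ with $p_k=1/m$ on the occupied levels to get $\log m$, and $S=\log N_{\mathrm{eff}}$ holds directly by the definition of $N_{\mathrm{eff}}$. The paper gives no argument for the Boltzmann sentence, so your additions are correct but go beyond it: the $0\log 0$ convention, the Jensen bound $1\le N_{\mathrm{eff}}\le$ (number of occupied levels), and the grouping identity $\log|\Omega|=S(p)+\sum_k p_k\log|\Omega_k|$, which reproduces the empirical $p$ when $\Omega$ is the set of $N$ grid cells and $\Omega_k=\{i : q_i=k\}$, make that informal claim precise.
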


\begin{proof}
If $p_k=1/m$ on $m$ occupied levels, then $S=-\sum_{k=1}^m (1/m)\log(1/m)=\log m$.
The definition \cref{eq:neff} gives $S=\log N_{\mathrm{eff}}$ by construction.
\end{proof}

\subsection*{Entropy Growth under Nonlinear Redistribution}
\label{subsec:entropy_growth}

While \cref{eq:exact_conservation} guarantees invariant total mass, nonlinear fluxes redistribute occupation numbers across levels.

\begin{definition}[Empirical level-transition matrix]
\label{def:level_transition}
Let $c_k^n := \#\{ i \mid q_i^n = k \}$ and $p_k^n=c_k^n/N$ as in \cref{eq:pk_definition}. Define the one-step transition counts
\[
N_{k'k}^n := \#\{ i \mid q_i^n = k\ \text{and}\ q_i^{n+1}=k' \},
\]
so that $\sum_{k'} N_{k'k}^n = c_k^n$ and $\sum_k N_{k'k}^n = c_{k'}^{n+1}$. For $c_k^n>0$, define
\begin{equation}
M_{k'k}^n := \frac{N_{k'k}^n}{c_k^n},
\label{eq:empirical_M}
\end{equation}
and set $M_{k'k}^n:=0$ when $c_k^n=0$. Then $M^n$ is column-stochastic on the occupied levels, in the sense that
\[
\sum_{k'} M_{k'k}^n = 1 \qquad \text{whenever } c_k^n>0,
\]
and satisfies
\begin{equation}
p^{n+1} = M^n p^n.
\label{eq:pn1_equals_Mpn}
\end{equation}
\end{definition}

\begin{remark}[Interpretation of $M^n$]
\label{rem:M_interpretation}
The matrix $M^n$ does not represent a permutation of grid indices. 
Instead it describes redistribution between quantised \emph{levels} induced by the conservative flux update.
Because the local antisymmetric transfer generally changes the multiplicity of each level, the resulting level-transition operator is typically not a permutation matrix. 
Thus the entropy discussion concerns mixing in level space rather than spatial rearrangement of the grid state.
\end{remark}

\begin{remark}[When is $M^n$ doubly stochastic?]
\label{rem:doubly_stochastic_condition}
By construction, $M^n$ satisfies $\sum_{k'} M_{k'k}^n=1$ on the occupied levels, i.e. whenever $c_k^n>0$. It is \emph{doubly} stochastic precisely when, in addition, $\sum_k M_{k'k}^n=1$ for every $k'$, i.e. when the induced level-transition mechanism has no net preference for any output level at the coarse-grained scale.
In practice we quantify deviations from bistochasticity by the row-sum defect
\(\rho^n := \max_{k'}\big|\sum_k M_{k'k}^n-1\big|\).
When \(\rho^n\) is small over the occupied levels, \cref{thm:entropy_increase} provides a sharp idealisation of the mixing mechanism, and we interpret observed entropy growth as operating in a near-bistochastic regime.
We do not claim that FQNM enforces bistochasticity in general; instead, bistochasticity is treated as an empirically testable condition quantified by \(\rho^n\).
\end{remark}

\begin{theorem}[Discrete entropy increase under bistochastic level mixing]
\label{thm:entropy_increase}
Let $p^n$ be defined by \cref{eq:pk_definition}, and let $M^n$ be the empirical level-transition matrix from \cref{def:level_transition}. If $M^n$ is doubly stochastic, then the discrete entropy \cref{eq:discrete_entropy} satisfies
\begin{equation}
S^{n+1} \ge S^n.
\label{eq:entropy_monotone}
\end{equation}
If, in addition, $p^{n+1}=M^n p^n$ is not a permutation of $p^n$, then the inequality is strict:
\begin{equation}
S^{n+1} > S^n.
\label{eq:entropy_strict}
\end{equation}
\end{theorem}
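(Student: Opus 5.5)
The plan is to prove \cref{eq:entropy_monotone} with a Jensen-type argument for the strictly concave function $\eta(t):=-t\log t$, and to obtain the strict case \cref{eq:entropy_strict} from majorisation theory. First fix the finite index set. Let $L$ be the finite set of levels occupied at time $n$ or $n+1$; it is finite because the grid has $N$ cells. I read ``doubly stochastic'' as a statement about the square matrix $(M^n_{k'k})_{k',k\in L}$: it has nonnegative entries and
\[
\sum_{k'\in L}M^n_{k'k}=1 \quad\text{and}\quad \sum_{k\in L}M^n_{k'k}=1
\]
for all indices in $L$. Columns of unoccupied levels need care, because they are set to zero in \cref{def:level_transition}. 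Since $p^n_k=0$ for those $k$, they contribute nothing to $M^np^n$. So whatever values those columns carry in a doubly stochastic completion, \cref{eq:pn1_equals_Mpn} still holds. I would state this interpretation explicitly at the start, since it is the main point of ambiguity in the hypothesis.

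\textbf{Monotonicity.} For each $k'\in L$, row-stochasticity makes $p^{n+1}_{k'}=\sum_k M^n_{k'k}p^n_k$ a convex combination of the values $p^n_k$. Concavity of $\eta$ then gives
\[
\eta(p^{n+1}_{k'})\ge\sum_k M^n_{k'k}\,\eta(p^n_k).
\]
Summing over $k'$ and using column-stochasticity, $\sum_{k'}M^n_{k'k}=1$, gives
\[
S^{n+1}=\sum_{k'}\eta(p^{n+1}_{k'})\ge\sum_k\eta(p^n_k)=S^n,
\]
which is \cref{eq:entropy_monotone}. The convention $0\log0=0$ keeps $\eta$ continuous and concave on $[0,1]$, so zero entries cause no trouble.

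\textbf{Strictness.} The equality case of Jensen alone does not immediately give ``permutation'', so here I switch to majorisation. Since $p^{n+1}=M^np^n$ with $M^n$ doubly stochastic, the Hardy--Littlewood--P\'olya theorem gives $p^{n+1}\prec p^n$. The function $x\mapsto\sum_k\eta(x_k)$ is symmetric and separable with a strictly concave summand, so it is strictly Schur-concave (Marshall--Olkin). That is, if $x\prec y$ and $x$ is not a permutation of $y$, then $\sum\eta(x_k)>\sum\eta(y_k)$. Applying this with $x=p^{n+1}$ and $y=p^n$ yields \cref{eq:entropy_strict}.

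I expect this strictness step to be the main obstacle. If I want it self-contained rather than cited, I would argue by the standard route. Write $p^{n+1}=\sum_\sigma\lambda_\sigma P_\sigma p^n$ via Birkhoff's theorem, where the $P_\sigma$ are permutation matrices and $\lambda_\sigma\ge0$ sum to one. Strict concavity of $S$ on the simplex then forces $S(p^{n+1})>\sum_\sigma\lambda_\sigma S(P_\sigma p^n)=S^n$, unless all the vectors $P_\sigma p^n$ with $\lambda_\sigma>0$ coincide. If they coincide, $p^{n+1}$ is itself a permutation of $p^n$, which the hypothesis excludes.
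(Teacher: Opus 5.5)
Your proof is correct. Its strictness step is the paper's argument: Hardy--Littlewood--P\'olya gives $p^{n+1}\prec p^n$, and strict Schur-concavity of $x\mapsto\sum_k\eta(x_k)$ with $\eta(t)=-t\log t$ finishes it. Your Birkhoff fallback is a self-contained version that the paper does not include.

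The genuine difference is in \cref{eq:entropy_monotone}. The paper obtains it from the same majorisation and Schur-concavity citation used for the strict case. You prove it directly: Jensen row by row, then summing against the column sums, which is the elementary mechanism underlying that citation.

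What your route buys is self-containedness and a weaker hypothesis. It only uses $\sum_k M^n_{k'k}=1$ on rows with $p^{n+1}_{k'}>0$ and $\sum_{k'}M^n_{k'k}=1$ on columns with $p^n_k>0$. Every other row and column contributes $\eta(0)=0$.

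This matters because of how \cref{def:level_transition} treats unoccupied levels. The columns of levels first occupied at step $n+1$ are set to zero there. So the matrix restricted to the union of supports, as in the paper's proof, is not literally doubly stochastic unless the two occupied sets coincide. The paper leaves this implicit; your completion remark makes it precise. Such a completion does exist. The occupied-level row and column conditions force the sets of levels occupied at steps $n$ and $n+1$ to have equal cardinality, so the zero columns can be filled by a bijection onto the zero rows.

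The paper's route, in exchange, is shorter and handles both inequalities uniformly through one cited theorem.
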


\begin{proof}
Restrict $M^n$ to the finite union of the supports of $p^n$ and $p^{n+1}$.
By \cref{eq:pn1_equals_Mpn}, we then have $p^{n+1}=M^n p^n$ on this finite-dimensional state space.
If $M^n$ is doubly stochastic, then $p^{n+1}$ is majorized by $p^n$ (Hardy--Littlewood--P\'olya; see \cite{marshallInequalitiesTheoryMajorization2011}).
Shannon entropy $S(p)=-\sum_k p_k\log p_k$ is Schur-concave, hence $S(p^{n+1})\ge S(p^n)$.
If, moreover, $p^{n+1}$ is not a permutation of $p^n$, then strict Schur-concavity gives $S(p^{n+1})>S(p^n)$.
\end{proof}

\begin{definition}[Discrete entropy production rate]
\label{def:entropy_rate}
Define the one-step entropy production rate by
\begin{equation}
\dot S^{n+\frac12} := \frac{S^{n+1}-S^n}{\Delta t}.
\label{eq:entropy_rate}
\end{equation}
\end{definition}

\subsection*{Continuous Limit}
\label{subsec:continuous_limit}

Under the consistency result of \cref{thm:consistency}, as $\delta \to 0$ with $\nu \le 1$ and $\delta/\Delta x \to 0$, the reconstruction $u_i = (\mathcal R_\delta q)_i$ converges (along subsequences) to a weak solution of \cref{eq:conservation_law} in the standard Lax framework for hyperbolic conservation laws \cite{evansPartialDifferentialEquations2010,toroRiemannSolversNumerical2009}.

As $\delta \to 0$, the reconstruction approaches a continuum field and \cref{eq:discrete_entropy} formally approaches continuum entropy functionals used in conservation-law theory. In this sense, macroscopic entropy inequalities emerge as scaling limits of combinatorial redistribution on a countable lattice.

\end{document}